\date{}
\renewcommand{\uppercasenonmath}[1]{}
\theoremstyle{plain}
\newtheorem{theorem}{Theorem}[section]
\newtheorem{proposition}[theorem]{Proposition}
\newtheorem{corollary}[theorem]{Corollary}
\newtheorem{example}[theorem]{Example}
\newtheorem*{open question}{Open Question}
\newtheorem{definition}[theorem]{Definition}
\theoremstyle{definition}
\newtheorem*{acknowledgement}{Acknowledgement}
\theoremstyle{remark}
\newtheorem{remark}[theorem]{Remark}
\newcommand{\Tor}{\mbox{\rm Tor}}
\def\p{\frak p}
\def\m{\frak m}
\def\Hom{{\rm Hom}}
\def\Tor{{\rm Tor}}
\def\Ker{{\rm Ker}}
\def\Im{{\rm Im}}
\def\Coker{{\rm Coker}}
\def\Ann{{\rm Ann}}
\def\Max{{\rm Max}}
\def\Spec{{\rm Spec}}
\def\Max{{\rm Max}}
\begin{document}
\begin{center}
{\large  \bf Characterizing $S$-flat modules and $S$-von Neumann regular rings by uniformity}

\vspace{0.5cm}   Xiaolei Zhang$^{a}$

{\footnotesize 

E-mail: zxlrghj@163.com\\}
\end{center}

\bigskip
\centerline { \bf  Abstract}
\bigskip
\leftskip10truemm \rightskip10truemm \noindent

Let $R$ be a ring and $S$  a multiplicative subset of $R$. An $R$-module $T$ is called $u$-$S$-torsion ($u$- always abbreviates uniformly) provided that $sT=0$ for some $s\in S$.  The notion of  $u$-$S$-exact sequences is also introduced from the viewpoint of uniformity.  An $R$-module $F$ is called  $u$-$S$-flat provided that the induced sequence $0\rightarrow A\otimes_RF\rightarrow B\otimes_RF\rightarrow C\otimes_RF\rightarrow 0$ is  $u$-$S$-exact for any  $u$-$S$-exact sequence $0\rightarrow A\rightarrow B\rightarrow C\rightarrow 0$.  A ring $R$ is called $u$-$S$-von Neumann regular  provided there exists an element  $s\in S$ satisfying that for any $a\in R$ there exists  $r\in R$ such that $sa=ra^2$. We obtain that a ring $R$ is a $u$-$S$-von Neumann regular ring if and only if any $R$-module is  $u$-$S$-flat. Several properties of   $u$-$S$-flat modules and  $u$-$S$-von Neumann regular rings are  obtained.
\vbox to 0.3cm{}\\
{\it Key Words:} $u$-$S$-torsion module,  $u$-$S$-exact sequence,  $u$-$S$-flat module, $u$-$S$-von Neumann regular ring.\\
{\it 2010 Mathematics Subject Classification:}  13C12, 16D40, 16E50.

\leftskip0truemm \rightskip0truemm
\bigskip

\section{Introduction}
Throughout this article, $R$ is always a commutative ring with identity and $S$ is always  a multiplicative subset of $R$, that is, $1\in S$ and $s_1s_2\in S$ for any $s_1\in S, s_2\in S$.  Let $S$  be a multiplicative subset of $R$. Recall from  \cite[Definition 1.6.10]{fk16} that an $R$-module $M$ is called an \emph{$S$-torsion} module if for any $m\in M$, there is an $s\in S$ such that $sm=0$. $S$-torsion-free modules can be defined as the right part of the hereditary torsion theory $\tau_S$ generated by $S$-torsion modules (see \cite{S75}).  Early in 1965, N\v{a}st\v{a}sescu et al. \cite{N65} defined \emph{$\tau_S$-Noetherian rings} as rings $R$ satisfying that for any ideal $I$ of $R$ there is a finitely generated sub-ideal $J$ of $I$ such that $I/J$ is $S$-torsion. However, to  tie together some Noetherian properties of commutative rings and their polynomial rings or  formal power series rings, Anderson and Dumitrescu \cite{ad02} defined \emph{$S$-Noetherian rings} $R$, that is, any ideal of $R$ is $S$-finite in 2002. Recall from \cite{ad02} that an $R$-module $M$ is called \emph{$S$-finite} provided that $sM\subseteq F$ for some $s\in S$ and some finitely generated submodule $F$ of $M$. One can see that there is some  uniformity is hidden in the definition of $S$-finite modules. In fact, an $R$-module $M$ is  $S$-finite if and only if $s(M/F)=0$ for some $s\in S$ and some finitely generated submodule $F$ of $M$. In this article, we introduce the notion of \emph{$u$-$S$-torsion} modules $T$ for which there exists  $s\in S$ such that $sT=0$. The  notion of  $u$-$S$-torsion modules is different from that of  $S$-torsion modules (see Example \ref{exam-not-ut}). In the past few years,  the notions of $S$-analogues of Noetherian rings, coherent rings, almost perfect rings and strong Mori domains are introduced and studied extensively in \cite{ad02,s19,bh18,l15,lO14,kkl14}.

In this article, we introduce the notions of  $u$-$S$-monomorphisms,   $u$-$S$-epimorphisms,   $u$-$S$-isomorphisms and   $u$-$S$-exact sequences according to the idea of uniformity (see Definition \ref{s-exact-sequ}). Some properties of $u$-$S$-torsion modules and $S$-finite modules with respect to  $u$-$S$-exact sequences are given in Proposition \ref{s-exct-tor} and Proposition \ref{s-finite-exact}. We say an $R$-module $F$ is \emph{$u$-$S$-flat} provided that the induced sequence $0\rightarrow A\otimes_RF\rightarrow B\otimes_RF\rightarrow C\otimes_RF\rightarrow 0$ is  $u$-$S$-exact for any  $u$-$S$-exact sequence $0\rightarrow A\rightarrow B\rightarrow C\rightarrow 0$ (see Definition \ref{def-s-f}). Some basic characterizations of  $u$-$S$-flat modules are given (see Theorem \ref{s-flat-ext}).   It is well known that  an $R$-module $F$ is flat if and only $\Tor_1^R(R/I,F)=0$ for any ideal $I$ of $R$. However, the $S$-analogue of this result is not true (see Example \ref{uf not-extsion}). It is also worth  remarking that the class of   $u$-$S$-flat modules is not closed under direct limits and direct sums (see Remark \ref{s-dirlimt-not}). If an $R$-module $F$ is  $u$-$S$-flat, then $F_S$ is flat over $R_S$ (see Corollary \ref{s-flat-loc}). However, the converse does not hold (see Remark \ref{s-flat-not}). A new local characterization of flat modules is given in Proposition \ref{s-flat-loc-char}. A ring $R$ is called a $u$-$S$-von Neumann regular ring if there exists an element  $s\in S$ satisfies that for any $a\in R$ there exists  $r\in R$ such that $sa=ra^2$ (see Definition \ref{def-s-vn}).  A ring $R$ is \emph{$u$-$S$-von Neumann regular} if and only if  any $R$-module is $u$-$S$-flat (see Theorem \ref{s-vn-ext-char}).  Every $u$-$S$-von Neumann regular ring is locally von Neumann regular at $S$ (see Corollary \ref{s-local-vn}). However, the converse is also not true in general (see Example \ref{S-vn-not}). We also give a non-trivial example of $u$-$S$-von Neumann regular which is not von Neumann regular (see Example \ref{exam-not-ut-1}). Finally, we give a new local characterization of von Neumann regular rings in Proposition \ref{s-vn-loc-char}.

\section{$u$-$S$-torsion modules}

Recall from  \cite[Definition 1.6.10]{fk16} that an $R$-module $T$ is said to be an $S$-torsion module if for any $t\in T$ there is an element $s\in S$ such that $st=0$. Note that the choice of $s$ is decided by the element $t$. In this article, we care more about the uniformity  of $s$ on $T$.

\begin{definition} Let $R$ be a ring and $S$ a multiplicative subset of $R$. An $R$-module $T$ is called a $u$-$S$-torsion $($abbreviates uniformly $S$-torsion$)$ module  provided that there exists an element $s\in S$ such that $sT=0$.
\end{definition}

Obviously, the submodules and quotients of $u$-$S$-torsion modules are also $u$-$S$-torsion. Note that  finitely generated $S$-torsion modules are $u$-$S$-torsion and any $u$-$S$-torsion modules are  $S$-torsion. However,  $S$-torsion modules are not necessary $u$-$S$-torsion. We also note that every $R$-module does not have a maximal $u$-$S$-torsion submodule.
\begin{example}\label{exam-not-ut}
Let $\mathbb{Z}$ be the ring of integers, $p$  a prime in $\mathbb{Z}$ and  $S=\{p^n\mid n\geq 0\}$.  Let $M=\mathbb{Z}_{(p)}/\mathbb{Z}$ be a $\mathbb{Z}$-module where $\mathbb{Z}_{(p)}$ is the localization of $\mathbb{Z}$ at $S$. Then
\begin{enumerate}
\item  $M$ is  $S$-torsion but not $u$-$S$-torsion.
 \item  $M$ has no maximal $u$-$S$-torsion submodule.
\end{enumerate}
\end{example}
\begin{proof} (1) Obviously, $M$ is an $S$-torsion module. Suppose there is an $p^n$ such that $p^nM=0$. However,  $p^n(\frac{1}{p^{n+1}}+\mathbb{Z})=\frac{1}{p}+\mathbb{Z}\not= 0+\mathbb{Z}$ in $M$. Thus $M$ is not $u$-$S$-torsion.

(2)  Suppose  $N$ is a maximal $u$-$S$-torsion submodule of $M$. Then there is an element $p^n\in S$ such that $p^n N=0$. Note  $N$ is a submodule of $M_n:=\{\frac{a}{p^n}+\mathbb{Z}\in M\mid  a\in \mathbb{Z}\}$. Since $M_{n+1}:=\{\frac{a}{p^{n+1}}+\mathbb{Z}\in M\mid  a\in \mathbb{Z}\}$ is a  $u$-$S$-torsion submodule of $M$ and $N$ is a proper submodule of $M_{n+1}$. It is a contradiction.
\end{proof}

\begin{proposition}\label{fin-S-u-tor} Let $R$ be a ring and $M$ an $R$-module. Let $S$  be a  multiplicative subset of $R$ consisting of finite elements. Then $M$ is  $S$-torsion if and only if $M$ is $u$-$S$-torsion.
\end{proposition}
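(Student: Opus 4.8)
The plan is to prove the two implications separately, noting in advance that all the content sits in just one of them. The direction $u$-$S$-torsion $\Rightarrow$ $S$-torsion is immediate and needs no hypothesis on $S$ at all: if there is a single $s\in S$ with $sM=0$, then in particular $sm=0$ for every $m\in M$, which is precisely the definition of an $S$-torsion module. So I would dispose of this in one line and spend no further effort on it.

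The substance is the converse, and here the finiteness of $S$ is exactly what I would exploit. Write $S=\{s_1,\dots,s_n\}$ and set $s:=s_1 s_2\cdots s_n$. Because $S$ is multiplicatively closed, $s\in S$, so $s$ is a legitimate candidate for a \emph{uniform} annihilator. The key step is then to verify that this single $s$ kills every element: given $m\in M$, the $S$-torsion hypothesis supplies an index $i$ with $s_i m=0$, and since $R$ is commutative I may factor $s=\bigl(\prod_{j\neq i} s_j\bigr)s_i$, whence $sm=\bigl(\prod_{j\neq i} s_j\bigr)(s_i m)=0$. As $m$ was arbitrary, $sM=0$, and $M$ is $u$-$S$-torsion.

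I do not expect a genuine obstacle here; the proposition is essentially a bookkeeping argument. The only two points to watch are that the chosen annihilator must itself belong to $S$ (guaranteed by multiplicative closure) and that commutativity is what lets me pull the ``killing'' factor $s_i$ out to the right of the product. It is worth recording that the finiteness assumption cannot be dropped: Example \ref{exam-not-ut} exhibits an $S$-torsion module over an infinite $S$ that fails to be $u$-$S$-torsion, precisely because no finite product of the elements $p^n$ bounds the torsion uniformly. This contrast is what makes the finite case nontrivial to state, even though its proof is short.
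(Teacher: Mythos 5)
Your proof is correct and follows exactly the paper's argument: the trivial direction first, then taking $s=s_1\cdots s_n$ as the uniform annihilator and using commutativity to factor out the $s_i$ that kills a given element. The only difference is that you spell out the factorization step and the remark on why finiteness is essential, both of which the paper leaves implicit.
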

\begin{proof} If $M$ is $u$-$S$-torsion, then $M$ is trivially  $S$-torsion.  Let $S=\{s_1,...,s_n\}$ and $s=s_1...s_n$.  Suppose $M$ is an  $S$-torsion module. Then for any $m\in M$, there is an element $s_i\in S$ such that $s_im=0$.  Thus $sm=0$ for any $m\in M$. So $sM=0$.
\end{proof}

\begin{proposition} Let $R$ be a ring and $S$ a multiplicative subset of $R$. If an $R$-module $M$ has a maximal $u$-$S$-torsion submodule, then  $M$ has only one maximal $u$-$S$-torsion submodule.
\end{proposition}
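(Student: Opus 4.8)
The plan is to reduce everything to a single closure property: the sum of two $u$-$S$-torsion submodules is again $u$-$S$-torsion. Once this is available, the uniqueness of a maximal $u$-$S$-torsion submodule is immediate from a standard maximality argument. So the proof splits into one substantive observation and one formal deduction.

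First I would prove the closure property. Suppose $N_1$ and $N_2$ are $u$-$S$-torsion submodules of $M$, so that $s_1N_1=0$ and $s_2N_2=0$ for some $s_1,s_2\in S$. For an arbitrary element $n_1+n_2$ of $N_1+N_2$ one has $s_1s_2(n_1+n_2)=s_2(s_1n_1)+s_1(s_2n_2)=0$, using commutativity of $R$. Since $s_1s_2\in S$, the submodule $N_1+N_2$ is annihilated by a single element of $S$, hence is $u$-$S$-torsion. This is the only place where any real content enters, and it is precisely where the uniformity matters: for ordinary $S$-torsion submodules there need be no common annihilator, but for $u$-$S$-torsion submodules one simply multiplies the two witnesses $s_1$ and $s_2$ together to obtain one element of $S$ killing the whole sum.

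With this in hand, let $N$ be a maximal $u$-$S$-torsion submodule of $M$ (which exists by hypothesis), and let $N'$ be any maximal $u$-$S$-torsion submodule. By the previous step, $N+N'$ is $u$-$S$-torsion and contains $N$; the maximality of $N$ then forces $N+N'=N$, so that $N'\subseteq N$. Now $N'$ is a maximal $u$-$S$-torsion submodule contained in the $u$-$S$-torsion submodule $N$, so $N'$ cannot be properly contained in $N$ without contradicting its own maximality; hence $N'=N$. Since $N'$ was an arbitrary maximal $u$-$S$-torsion submodule, $M$ has exactly one such submodule.

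I expect no genuine obstacle here. The entire weight of the argument rests on the finite-sum closure property, and the remaining deduction is the routine fact that in any family of subobjects closed under pairwise joins, a maximal element (if one exists) is unique and in fact contains every member of the family. It may be worth remarking explicitly that this maximal $u$-$S$-torsion submodule, when it exists, is then the unique \emph{largest} $u$-$S$-torsion submodule, since the argument shows $N'\subseteq N$ for every $u$-$S$-torsion submodule $N'$, not merely for the maximal ones.
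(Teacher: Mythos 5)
Your proof is correct and rests on the same key observation as the paper's: the product $s_1s_2$ of the two annihilating elements kills the join of the two submodules, so maximality forces them to coincide. The only cosmetic difference is that you form the full sum $N+N'$ while the paper adjoins a single element $m\in M_2\setminus M_1$ to $M_1$; the argument is essentially identical.
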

\begin{proof} Let $M_1$ and $M_2$ be maximal $u$-$S$-torsion submodules of $M$ such that $s_1M_1=0$ and $s_2M_2=0$ for some $s_1,s_2\in S$. We claim that $M_1=M_2$. Indeed, otherwise we may assume there is an $m\in M_2-M_1$. Let $M_3$ be a submodule of $M$  generated by  $M_1$ and $m$.  Then $s_1s_2M_3=0$. Thus $M_3$ is a $u$-$S$-torsion submodule properly containing $M_1$, which is a contradiction.
\end{proof}

Recall from  \cite[Definition 1.6.10]{fk16} that an $R$-module $M$ is said to be an \emph{$S$-torsion-free module} if  $sm=0$ for some $s\in S$ and $m\in M$ implies that $m=0$. The classes of $S$-torsion modules and  $S$-torsion-free modules constitute a hereditary torsion theory (see \cite{S75}). From this result it follows immediately the next result (see \cite[Theorem 6.1.6]{fk16}). However we give direct proof for completeness.

\begin{proposition}  Let $R$ be a ring and $S$ a multiplicative subset of $R$.
Then an $R$-module $F$ is $S$-torsion-free if and only if $\Hom_R(T,F)=0$ for any $u$-$S$-torsion module $T$.
\end{proposition}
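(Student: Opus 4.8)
The plan is to prove both implications directly, using only the definitions of $u$-$S$-torsion and $S$-torsion-free modules; no earlier machinery is really needed beyond the observation that a single element annihilated by some $s\in S$ generates a cyclic $u$-$S$-torsion module.

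For the forward direction, suppose $F$ is $S$-torsion-free and let $T$ be an arbitrary $u$-$S$-torsion module, so that $sT=0$ for some fixed $s\in S$. Given any homomorphism $f\in\Hom_R(T,F)$, I would compute, for each $t\in T$, that $sf(t)=f(st)=f(0)=0$. Since $F$ is $S$-torsion-free and $s\in S$, this forces $f(t)=0$. As $t$ was arbitrary, $f=0$, and hence $\Hom_R(T,F)=0$. The key point is that the single element $s$ annihilating all of $T$ is exactly what lets the $S$-torsion-freeness of $F$ be applied uniformly to every value $f(t)$.

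For the converse I would argue by contraposition. If $F$ is not $S$-torsion-free, then by definition there exist $s\in S$ and a nonzero $m\in F$ with $sm=0$. The idea is to manufacture a nonzero homomorphism into $F$ from a $u$-$S$-torsion module. I would take the cyclic submodule $Rm\subseteq F$: since $s(rm)=r(sm)=0$ for every $r\in R$, we have $sRm=0$, so $Rm$ is $u$-$S$-torsion, and it is nonzero because $m\neq 0$. The inclusion map $Rm\hookrightarrow F$ is then a nonzero element of $\Hom_R(Rm,F)$, contradicting the hypothesis that $\Hom_R(T,F)=0$ for all $u$-$S$-torsion $T$. Hence $F$ must be $S$-torsion-free.

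There is no serious obstacle in this argument; both directions are short and essentially formal. The only point that deserves care—and the place where the \emph{uniform} nature of $u$-$S$-torsion is used—is the reverse direction: one must check that a single non-torsion-free witness $m$ already yields a genuinely $u$-$S$-torsion (not merely $S$-torsion) submodule, which holds precisely because one element is killed by one element of $S$. Were the test modules only required to be $S$-torsion, this construction would be unaffected, but it is worth emphasizing that the cyclic module $Rm$ is annihilated uniformly by $s$.
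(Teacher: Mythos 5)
Your proof is correct and follows essentially the same route as the paper: the forward direction is identical, and for the converse the paper uses the submodule $F_s=\{x\in F\mid sx=0\}$ where you use the cyclic submodule $Rm$ — in both cases one exhibits a nonzero $u$-$S$-torsion submodule of $F$ whose inclusion contradicts the vanishing of $\Hom$.
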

\begin{proof} Assume that $F$ is an $S$-torsion-free module and let $T$ be a $u$-$S$-torsion module  and $f\in \Hom_R(T,F)$. Then there exists  $s\in S$ such that $sT=0$. Thus for any  $t\in T$,  $sf(t)=f(st)=0\in F$. Thus $f(t)=0$ for any  $t\in T$. Conversely suppose that $sm=0$  for some $s\in S$  and $m\in F$. Set $F_s=\{x\in F\mid sx=0\}$. Then $F_s$ is a $u$-$S$-torsion submodule of $F$. Thus $\Hom_R(F_s,F)=0$. It follows that $F_s=0$ and thus $m$=0. So $F$ is $S$-torsion-free.
\end{proof}

\begin{corollary}\label{S-tor-tor} Let  $R$ be a ring, $S$ a multiplicative subset of $R$ and $T$  a $u$-$S$-torsion module. Then $\Tor^R_{n}(M,T)$ is $u$-$S$-torsion for any $R$-module $M$ and $n\geq 0$.
\end{corollary}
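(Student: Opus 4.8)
The plan is to exploit the functoriality and $R$-linearity of $\Tor$, and the fact that a uniform annihilator of $T$ survives as an annihilator of everything functorially built from $T$. Since $T$ is $u$-$S$-torsion, choose $s\in S$ with $sT=0$. Equivalently, the multiplication-by-$s$ endomorphism $\mu_s\colon T\to T$, $t\mapsto st$, is the zero map. My claim is that the \emph{same} element $s$ annihilates every $\Tor^R_n(M,T)$.

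First I would record the standard homological fact that, for each fixed $M$ and each $n\geq 0$, the functor $\Tor^R_n(M,-)$ is additive and $R$-linear, and that the endomorphism it induces from $\mu_s$ is exactly multiplication by $s$ on $\Tor^R_n(M,T)$. To make this precise one computes $\Tor$ from a projective resolution $P_\bullet\to M$, so that $\Tor^R_n(M,T)=H_n(P_\bullet\otimes_R T)$; at the chain level $1_{P_k}\otimes\mu_s$ coincides with multiplication by $s$ on $P_k\otimes_R T$ because $R$ is commutative, and passing to homology shows that $\Tor^R_n(M,\mu_s)$ is precisely the scalar action of $s$ on $\Tor^R_n(M,T)$.

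Then I would combine this with $\mu_s=0$: applying the additive functor $\Tor^R_n(M,-)$ to the zero map yields the zero endomorphism, so $\Tor^R_n(M,\mu_s)=0$. Since this induced map is multiplication by $s$, we conclude $s\,\Tor^R_n(M,T)=0$, which is exactly the statement that $\Tor^R_n(M,T)$ is $u$-$S$-torsion. The case $n=0$ is even more transparent, as $s(M\otimes_R T)=M\otimes_R sT=0$, but the argument above handles all $n$ uniformly and with the same $s$.

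There is no serious obstacle here; the only point requiring a little care is the identification of the functorially induced endomorphism $\Tor^R_n(M,\mu_s)$ with honest scalar multiplication by $s$ on the $\Tor$-group, i.e. the principle that scalar multiplication in the second argument induces the corresponding scalar multiplication on homology. Everything else is formal, and crucially the resulting annihilator $s$ does not depend on $M$ or on $n$, which is what makes the output uniformly $S$-torsion rather than merely $S$-torsion.
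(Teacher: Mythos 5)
Your proof is correct, but it takes a genuinely different route from the paper. The paper argues by dimension shifting: for $n=0$ it computes directly on elementary tensors; for $n=1$ it embeds $\Tor_1^R(M,T)$ as a submodule of $\Omega(M)\otimes_R T$ (using a projective presentation $0\to\Omega(M)\to P\to M\to 0$) and invokes the fact that submodules of $u$-$S$-torsion modules are $u$-$S$-torsion; for $n\geq 2$ it reduces to the $n=1$ case via syzygies and induction. You instead use functoriality: since $\mu_s\colon T\to T$ is the zero map and $\Tor^R_n(M,-)$ is an additive $R$-linear functor whose value on $\mu_s$ is scalar multiplication by $s$ on $\Tor^R_n(M,T)$, the conclusion is immediate and manifestly uniform in both $M$ and $n$. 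The one point you flag --- that $\Tor^R_n(M,\mu_s)$ really is multiplication by $s$ --- is standard and your chain-level verification on $P_\bullet\otimes_R T$ is the right way to see it (commutativity of $R$ is needed here, and the paper assumes it throughout). Your argument is more conceptual and transfers verbatim to any additive $R$-linear functor (e.g.\ $\Ext$), while the paper's is more elementary, resting only on the long exact sequence and the closure of $u$-$S$-torsion modules under submodules; both deliver the same annihilating element $s$.
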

\begin{proof}  Let $T$ be a $u$-$S$-torsion module with $sT=0$. If $n=0$, then for any $\sum a\otimes b\in M\otimes_RT$, we have $s\sum a\otimes b=\sum a\otimes sb=0$. Thus $s(M\otimes_RT)=0$. Let $0\rightarrow\Omega (M)\rightarrow P\rightarrow M \rightarrow0$ be a short exact sequence with $P$ projective. Then $\Tor^R_{1}(M,T)$ is a submodule of $\Omega(M)\otimes_RT$ which is $u$-$S$-torsion. Thus $\Tor^R_{1}(M,T)$ is $u$-$S$-torsion. For $n\geq 2$, we have an isomorphism $\Tor^R_{n}(M,T)\cong \Tor^R_{1}(\Omega^{n-1}(M),T)$ where $\Omega^{n-1}(M)$ is the $(n-1)$-th syzygy of $M$. Since $\Tor^R_{1}(\Omega^{n-1}(M),T)$ is  $u$-$S$-torsion by induction, $\Tor^R_{n}(M,T)$ is  $u$-$S$-torsion.
\end{proof}

\begin{definition} \label{s-exact-sequ}
 Let $R$ be a ring and $S$ a multiplicative subset of $R$. Let $M$, $N$ and $L$ be $R$-modules.
\begin{enumerate}
\item An  $R$-homomorphism $f:M\rightarrow N$ is called a $u$-$S$-monomorphism $($resp.,    $u$-$S$-epimorphism$)$ provided that $\Ker(f)$ $($resp., $\Coker(f))$ is a  $u$-$S$-torsion module.

 \item    An  $R$-homomorphism $f:M\rightarrow N$ is  called a $u$-$S$-isomorphism  provided that $f$ is both a $u$-$S$-monomorphism and a $u$-$S$-epimorphism.

\item An $R$-sequence  $M\xrightarrow{f} N\xrightarrow{g} L$ is called  $u$-$S$-exact provided that there is an element $s\in S$ such that $s\Ker(g)\subseteq \Im(f)$ and $s\Im(f)\subseteq \Ker(g)$.
\end{enumerate}
\end{definition}
It is easy to verify that  $f:M\rightarrow N$ is a $u$-$S$-monomorphism  $($resp.,   $u$-$S$-epimorphism$)$ if and only if  $0\rightarrow M\xrightarrow{f} N$   $($resp., $M\xrightarrow{f} N\rightarrow 0$ $)$ is   $u$-$S$-exact.

\begin{proposition}\label{s-exct-tor}
Let $R$ be a ring, $S$ a multiplicative subset of $R$ and $M$ an $R$-module. Then the following assertions hold.
\begin{enumerate}
\item Suppose $M$ is $u$-$S$-torsion  and $f:L\rightarrow M$ is a $u$-$S$-monomorphism. Then $L$ is $u$-$S$-torsion.

\item  Suppose $M$ is $u$-$S$-torsion  and $g:M\rightarrow N$ is a $u$-$S$-epimorphism. Then $N$ is $u$-$S$-torsion.

\item Let $f:M\rightarrow N$ be a $u$-$S$-isomorphism. If one of  $M$ and $N$ is $u$-$S$-torsion, so is the other.

\item Let $0\rightarrow L\xrightarrow{f} M\xrightarrow{g} N\rightarrow 0$  be a $u$-$S$-exact sequence. Then  $M$ is  $u$-$S$-torsion if and only if $L$ and $N$ are $u$-$S$-torsion.
\end{enumerate}
\end{proposition}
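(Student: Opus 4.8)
The plan is to reduce all four statements to elementary chases with elements of $S$, exploiting the one structural fact we need: since $S$ is multiplicative, any finite collection of witnessing elements $s_1,\dots,s_k\in S$ may be replaced by their product $s_1\cdots s_k\in S$. So in each case I would produce a handful of elements of $S$ from the hypotheses and then multiply them together to annihilate the target module.

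First I would prove (1). Write $sM=0$ and, using that $f$ is a $u$-$S$-monomorphism, choose $s'\in S$ with $s'\Ker(f)=0$. Given $x\in L$ we have $f(sx)=sf(x)=0$, so $sx\in\Ker(f)$ and hence $s'sx=0$; thus $(s's)L=0$, i.e. $L$ is $u$-$S$-torsion. Part (2) is the dual chase: with $sM=0$ and $s'\Coker(g)=0$, any $y\in N$ satisfies $s'y\in\Im(g)$, say $s'y=g(m)$, whence $ss'y=g(sm)=0$, so $(ss')N=0$. Part (3) is then immediate, since a $u$-$S$-isomorphism is in particular both a $u$-$S$-monomorphism and a $u$-$S$-epimorphism: if $N$ is $u$-$S$-torsion apply (1) to $f$, and if $M$ is $u$-$S$-torsion apply (2) to $f$.

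For part (4), the forward implication again needs only (1) and (2), because the $u$-$S$-exactness of $0\to L\xrightarrow{f} M\xrightarrow{g} N\to 0$ already records that $f$ is a $u$-$S$-monomorphism and $g$ is a $u$-$S$-epimorphism; so $M$ being $u$-$S$-torsion forces $L$ and $N$ to be $u$-$S$-torsion. The converse is the one place the \emph{middle} exactness condition enters, and it is the main step. I would pick $s_1,s_2\in S$ with $s_1L=0$ and $s_2N=0$, and $s_3\in S$ with $s_3\Ker(g)\subseteq\Im(f)$ coming from $u$-$S$-exactness at $M$. For $x\in M$ we get $g(s_2x)=s_2g(x)=0$, so $s_2x\in\Ker(g)$ and therefore $s_3s_2x\in\Im(f)$, say $s_3s_2x=f(l)$; then $s_1s_2s_3x=f(s_1l)=0$ because $s_1L=0$. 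Hence $(s_1s_2s_3)M=0$ and $M$ is $u$-$S$-torsion.

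The hard part is not conceptual but bookkeeping: each node of the sequence and each morphism supplies its own element of $S$, and the whole argument turns on collecting these into a single product before concluding. This is exactly where multiplicativity of $S$ is indispensable, and it is the only point at which the proof could go wrong if one were careless. Everything else is a direct application of the definition of $u$-$S$-torsion together with the already-noted closure of the class of $u$-$S$-torsion modules under submodules and quotients.
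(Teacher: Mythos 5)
Your proof is correct and follows essentially the same element-chasing argument as the paper; in particular the key computation in the converse of (4) --- combining $s_2$ killing $N$, $s_3$ pushing $\Ker(g)$ into $\Im(f)$, and $s_1$ killing $L$ into the single witness $s_1s_2s_3$ --- is identical to the paper's. The only difference is organizational: the paper proves (4) first and derives (1)--(3) as consequences, whereas you prove (1) and (2) directly and deduce (3) and the forward half of (4) from them.
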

\begin{proof} We only prove $(4)$ since $(1)$, $(2)$ and $(3)$ are the consequences of $(4)$.

 Suppose  $M$ is  $u$-$S$-torsion with $sM=0$.
Since $\Ker(f)$ (resp., $\Coker(g)$) is $u$-$S$-torsion with $s_1\Ker(f)=0$ (resp., $s_2\Coker(g)=0$)  for some $s_1\in S$ (resp., $s_2\in S$), it follows that $ss_1L=0$ (resp.,   $ss_2N=0$). Consequently,  $L$ (resp.,  $N$) is $u$-$S$-torsion. Now suppose $L$ and $N$ are $u$-$S$-torsion with $s_1L=s_2N=0$ for some $s_1, s_2\in S$. Since the  $u$-$S$-exact sequence is exact at $M$, there exists  $s\in S$ such that $s\Ker(g)\subseteq\Im(f)$ and $s\Im(f)\subseteq\Ker(g)$. Let $m\in M$. Then $s_2g(m)=g(s_2m)=0$. Thus there exists $l\in L$ such that $ss_2m= f(l)$. So $s_1ss_2m= s_1f(l)=f(s_1l)=0$.  So $M$ is  $u$-$S$-torsion.
\end{proof}

Let $R$ be a ring and $S$ a multiplicative subset of $R$. Recall from \cite{ad02} that an $R$-module $M$ is called \emph{$S$-finite}  provided that there exists  $s\in S$ such that $sM\subseteq N\subseteq M$, where $N$ is a finitely generated $R$-module. Let $M$ be an $R$-module,  $\{m_i\}_{i\in \Lambda}\subseteq M$ and $N=\langle m_i\rangle_{i\in \Lambda}$. We say an $R$-module $M$ is \emph{$S$-generated} by  $\{m_i\}_{i\in \Lambda}$ provided that $sM\subseteq N$ for some $s\in S$. Thus an $R$-module $M$ is  $S$-finite provided that $M$ can be $S$-generated by finite elements.

\begin{proposition}\label{s-finite-exact}
Let $R$ be a ring,  $S$ a multiplicative subset of $R$ and $M$ an $R$-module. Then the following assertions hold.
\begin{enumerate}
\item Let $M$ be an $S$-finite $R$-module and $f:M\rightarrow N$  a $u$-$S$-epimorphism. Then $N$ is $S$-finite.

\item Let  $0\rightarrow L\xrightarrow{f} M\xrightarrow{g} N\rightarrow 0$  be a $u$-$S$-exact sequence. If $L$ and $N$ are $S$-finite, so is $M$.

\item Let $f:M\rightarrow N$ be a $u$-$S$-isomorphism. If one of  $M$ and $N$ is $S$-finite, so is the other.
\end{enumerate}
\end{proposition}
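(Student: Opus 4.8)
The plan is to work throughout with the reformulation of $S$-finiteness recorded just before the statement: $M$ is $S$-finite precisely when it is $S$-generated by finitely many elements, i.e. there exist $s\in S$ and $m_1,\dots,m_k\in M$ with $sM\subseteq\langle m_1,\dots,m_k\rangle$. Each part is then proved by exhibiting an explicit finite generating set together with a single element of $S$ (always a finite product of members of $S$, hence again in $S$) that pushes the relevant module into the span of that set. Since (3) follows formally from (1) and (2), the work is concentrated in the first two parts. For (1), choose $s_0\in S$ and $m_1,\dots,m_k\in M$ with $s_0M\subseteq\langle m_1,\dots,m_k\rangle$, and choose $s_1\in S$ with $s_1N\subseteq\Im(f)$, which is exactly the hypothesis that $\Coker(f)$ is $u$-$S$-torsion. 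I would then claim that $\{f(m_1),\dots,f(m_k)\}$ $S$-generates $N$ via $s_0s_1$: given $n\in N$, first write $s_1n=f(x)$ with $x\in M$, then $s_0x=\sum_i r_im_i$, and apply $f$ to obtain $s_0s_1n=\sum_i r_if(m_i)$. Thus $s_0s_1N\subseteq\langle f(m_1),\dots,f(m_k)\rangle$ and $N$ is $S$-finite.

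Part (2) is the heart of the argument and the step I expect to be the main obstacle, since it requires a careful element chase that accumulates several factors from $S$ while keeping the finite generating set inside $M$. Fix finite $S$-generating data $s_LL\subseteq\langle l_1,\dots,l_p\rangle$ and $s_NN\subseteq\langle n_1,\dots,n_q\rangle$; choose $s_g\in S$ with $s_gN\subseteq\Im(g)$ (from $\Coker(g)$ being $u$-$S$-torsion) and lift each to $s_gn_j=g(x_j)$ with $x_j\in M$; and take $s\in S$ witnessing exactness at $M$, so $s\Ker(g)\subseteq\Im(f)$. Given $m\in M$, I would write $s_Ng(m)=\sum_j a_jn_j$; multiplying by $s_g$ and using $s_gn_j=g(x_j)$ gives $s_gs_Ng(m)=g(\sum_j a_jx_j)$, so $s_gs_Nm-\sum_j a_jx_j\in\Ker(g)$. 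Applying the exactness multiplier $s$ moves this error into $\Im(f)=f(L)$, say $s(s_gs_Nm-\sum_j a_jx_j)=f(y)$; finally $s_Ly=\sum_i b_il_i$ lets me rewrite $s_Lf(y)=\sum_i b_if(l_i)$. Tracking the constants shows
\[
s_Lss_gs_N\cdot m=s_Ls\sum_j a_jx_j+\sum_i b_if(l_i)\in\langle f(l_1),\dots,f(l_p),x_1,\dots,x_q\rangle
\]
for every $m$, so with $t=s_Lss_gs_N\in S$ we get $tM\subseteq\langle f(l_i),x_j\rangle$ and $M$ is $S$-finite. The only points needing care are that every generator produced genuinely lies in $M$ and that the final multiplier is a finite product of elements of $S$.

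For (3), if $M$ is $S$-finite then $N$ is $S$-finite by (1), since a $u$-$S$-isomorphism is in particular a $u$-$S$-epimorphism. Conversely, suppose $N$ is $S$-finite, and choose $s_2\in S$ with $s_2N\subseteq\Im(f)$ (from $f$ being a $u$-$S$-epimorphism) and $s_NN\subseteq\langle n_1,\dots,n_q\rangle$. I would apply (2) to the genuinely exact, hence $u$-$S$-exact, sequence $0\to\Ker(f)\to M\to\Im(f)\to0$. Here $\Ker(f)$ is $u$-$S$-torsion, hence $S$-finite with finite part $0$, and $\Im(f)$ is $S$-finite because $s_2s_N(\Im f)\subseteq s_2\langle n_1,\dots,n_q\rangle=\langle s_2n_1,\dots,s_2n_q\rangle\subseteq\Im(f)$, the last inclusion holding since $s_2N\subseteq\Im(f)$. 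Part (2) then yields that $M$ is $S$-finite, completing the proof.
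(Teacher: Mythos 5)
Your proof is correct and follows essentially the same element-chasing strategy as the paper: fix finite $S$-generating sets, lift generators across the maps, and accumulate a single product of elements of $S$ as the uniform multiplier. In fact, in part (2) you are slightly more careful than the paper, since by introducing $s_g$ with $s_gN\subseteq\Im(g)$ before lifting the generators $n_j$ you handle the fact that $g$ need not be surjective, a point the paper's phrase ``the finite pre-images of finite generators of $N_1$'' quietly presupposes.
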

\begin{proof} (1) Consider the exact sequence $M\xrightarrow{f} N\rightarrow T\rightarrow 0$ with $sT=0$ for some $s\in S$. Let $F$ be a finitely generated submodule of $M$ such that $s'M\subseteq F$ for some $s'\in S$. Then $f(F)$ is a finitely generated submodule of $N$ such that $ss'N\subseteq f(F)$.

(2) Suppose $0\rightarrow L\xrightarrow{f} M\xrightarrow{g} N\rightarrow 0$  is a $u$-$S$-exact sequence. Let $L_1$ and $N_1$ be finitely generated submodules of $L$ and $N$ such that $s_LL\subseteq L_1$ and $s_NN\subseteq N_1$ for some $s_L, s_N\in S$  respectively. Let $M_1$ be a finitely generated submodule of $M$ generated by the  finite images of generators  of $L_1$ and the finite pre-images of finite generators  of $N_1$. Then for any $m\in M$, $s_Ng(m)\in N_1$. Thus there exists $m_1\in M_1$ such that $s_Ng(m)=g(m_1)$. We have $s_Nm-m_1\in \Ker(g)$. Since there exists $s\in S$ such that $s\Ker(g)\subseteq \Im(f)$. So there exists $l\in L$ such that $s(s_Nm-m_1)=f(l)$. Then there exists $l_1\in L_1$ such that $s_Ll=l_1$. Thus  $s_Ls(s_Nm-m_1)=s_Lf(l)=f(s_Ll)=f(l_1)$. Consequently, $s_Lss_Nm=s_Lsm_1+sf(l_1)\in M_1$. So $s_Lss_NM\subseteq M_1$. Since $M_1$ is finitely generated, we have $M$ is $S$-finite.

(3) It is a consequence of $(2)$.

\end{proof}

\section{  $u$-$S$-flat modules and  $u$-$S$-von Neumann regular rings }
Recall from \cite{fk16} that an $R$-module $F$ is called \emph{flat} provided that for any short exact sequence $0\rightarrow A\rightarrow B\rightarrow C\rightarrow 0$, the induced sequence $0\rightarrow A\otimes_RF\rightarrow B\otimes_RF\rightarrow C\otimes_RF\rightarrow 0$ is exact. Now, we give an $S$-analogue of flat modules.
\begin{definition}\label{def-s-f} Let $R$ be a ring, $S$ a multiplicative subset of $R$.
An $R$-module $F$ is called  $u$-$S$-flat (abbreviates uniformly $S$-flat) provided that for any  $u$-$S$-exact sequence $0\rightarrow A\rightarrow B\rightarrow C\rightarrow 0$, the induced sequence $0\rightarrow A\otimes_RF\rightarrow B\otimes_RF\rightarrow C\otimes_RF\rightarrow 0$ is  $u$-$S$-exact .
\end{definition}

Recall from \cite{fk16} that an $R$-module $F$  is flat if and only if  $\Tor^R_1(M,F)=0$   for any  $R$-module $M$, if and only if $\Tor^R_n(M,F)=0$  for any  $R$-module $M$ and $n\geq 1$. We give an $S$-analogue of this result.

\begin{theorem}\label{s-flat-ext}
Let $R$ be a ring, $S$ a multiplicative subset of $R$ and $F$ an $R$-module. The following statements are equivalent:
\begin{enumerate}
\item  $F$ is  $u$-$S$-flat;

\item for any short exact sequence $0\rightarrow A\xrightarrow{f} B\xrightarrow{g} C\rightarrow 0$, the induced sequence $0\rightarrow A\otimes_RF\xrightarrow{f\otimes_RF} B\otimes_RF\xrightarrow{g\otimes_RF} C\otimes_RF\rightarrow 0$ is  $u$-$S$-exact;

\item  $\Tor^R_1(M,F)$ is  $u$-$S$-torsion for any  $R$-module $M$;

\item  $\Tor^R_n(M,F)$ is  $u$-$S$-torsion for any  $R$-module $M$ and $n\geq 1$.
\end{enumerate}
\end{theorem}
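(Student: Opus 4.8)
The plan is to prove the cycle of implications $(1)\Rightarrow(2)\Rightarrow(3)\Rightarrow(4)\Rightarrow(1)$, mirroring the classical flatness characterization but tracking uniform bounds throughout. The implication $(1)\Rightarrow(2)$ is immediate, since an ordinary short exact sequence $0\rightarrow A\rightarrow B\rightarrow C\rightarrow 0$ is in particular $u$-$S$-exact (one may take $s=1$), so $u$-$S$-flatness applies directly. For $(4)\Rightarrow(1)$, I would start from an arbitrary $u$-$S$-exact sequence $0\rightarrow A\xrightarrow{f} B\xrightarrow{g} C\rightarrow 0$ and show that tensoring with $F$ preserves $u$-$S$-exactness, using that the higher $\Tor$ terms measuring the failure of exactness are all $u$-$S$-torsion.

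The core of the argument is $(2)\Rightarrow(3)$ and the short step $(3)\Rightarrow(4)$. For $(3)\Rightarrow(4)$ I would argue by dimension shifting exactly as in Corollary \ref{S-tor-tor}: taking a short exact sequence $0\rightarrow\Omega(M)\rightarrow P\rightarrow M\rightarrow 0$ with $P$ projective gives $\Tor^R_n(M,F)\cong\Tor^R_{n-1}(\Omega(M),F)$ for $n\geq 2$, and induction reduces everything to the case $n=1$, which is hypothesis $(3)$. For $(2)\Rightarrow(3)$, given an $R$-module $M$ I would present it as $0\rightarrow K\xrightarrow{f} P\xrightarrow{g} M\rightarrow 0$ with $P$ free (hence flat). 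Applying $-\otimes_R F$ and hypothesis $(2)$ to this genuine short exact sequence, the sequence $0\rightarrow K\otimes_R F\xrightarrow{f\otimes F} P\otimes_R F\xrightarrow{g\otimes F} M\otimes_R F\rightarrow 0$ is $u$-$S$-exact; in particular there is an $s\in S$ with $s\Ker(f\otimes F)\subseteq\Im(0)=0$, i.e. $s\Ker(f\otimes F)=0$. Since $P$ is flat, the long exact $\Tor$ sequence identifies $\Tor^R_1(M,F)$ with $\Ker(f\otimes F)$, and hence $s\Tor^R_1(M,F)=0$, so $\Tor^R_1(M,F)$ is $u$-$S$-torsion.

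The main obstacle, and the place where genuine care is required, is $(4)\Rightarrow(1)$: one must produce a \emph{single} $s\in S$ witnessing $u$-$S$-exactness of the tensored sequence, assembled from finitely many separately chosen elements of $S$. Concretely, given a $u$-$S$-exact sequence $0\rightarrow A\xrightarrow{f} B\xrightarrow{g} C\rightarrow 0$, the failure of exactness of $0\rightarrow A\otimes_R F\rightarrow B\otimes_R F\rightarrow C\otimes_R F\rightarrow 0$ at each of its three spots is governed by $\Ker(f)$, $\Im(f)$ versus $\Ker(g)$, and $\Coker(g)$, together with the relevant $\Tor_1^R(-,F)$ terms arising from the short exact sequences into which the $u$-$S$-exact sequence decomposes. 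Each such term is $u$-$S$-torsion by $(4)$ (and by Corollary \ref{S-tor-tor}, since the connecting pieces are built from $u$-$S$-torsion modules), so each is annihilated by some element of $S$; multiplying these finitely many annihilators together, and absorbing the element coming from the $u$-$S$-exactness of the original sequence, yields one element of $S$ that simultaneously controls all three spots. I would carry this out by factoring $f$ and $g$ through their images and applying $-\otimes_R F$ to the resulting short exact sequences, reading off the uniform annihilators from the associated long exact $\Tor$ sequences. The bookkeeping is routine once the decomposition is fixed; the only subtlety is ensuring every intermediate module whose annihilator one invokes is genuinely $u$-$S$-torsion rather than merely $S$-torsion, which is exactly what $(4)$ and Corollary \ref{S-tor-tor} guarantee.
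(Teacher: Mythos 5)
Your proposal is correct and follows essentially the same route as the paper: the trivial implication $(1)\Rightarrow(2)$, the projective/free presentation for $(2)\Rightarrow(3)$, dimension shifting for $(3)\Rightarrow(4)$, and then the hard direction handled by factoring $f$ and $g$ through their images, reading off $u$-$S$-torsion kernels and cokernels from the long exact $\Tor$ sequences, and multiplying the finitely many resulting annihilators with the element witnessing $u$-$S$-exactness of the original sequence. The only organizational difference is that you close the cycle via $(4)\Rightarrow(1)$ where the paper proves $(2)\Rightarrow(1)$ (using a pull-back diagram for the spot at $A\otimes_RF$), but the decomposition and the uniform-annihilator bookkeeping are the same.
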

\begin{proof} $(1)\Rightarrow(2)$, $(3)\Rightarrow(2)$ and $(4)\Rightarrow(3)$: Trivial.

$(2)\Rightarrow(3)$:  Let $0\rightarrow L\rightarrow P\rightarrow M\rightarrow 0$ be a short exact sequence with $P$ projective. Then there exists a long exact sequence  $$0\rightarrow\Tor_1^R(M,F)\rightarrow F\otimes L\rightarrow P\otimes F\rightarrow M\otimes F\rightarrow 0.$$ Thus  $\Tor^R_1(M,F)$ is  $u$-$S$-torsion by $(2)$.

$(3)\Rightarrow(4)$:  Let $M$ be an $R$-module. Denote by $\Omega^{n-1}(M)$ the $(n-1)$-th syzygy of $M$. Then $\Tor^R_n(M,F)\cong \Tor^R_1(\Omega^{n-1}(M),F)$ is $u$-$S$-torsion by $(3)$.

$(2)\Rightarrow(1)$: Let $F$ be an $R$-module satisfies $(2)$.  Suppose $0\rightarrow A\xrightarrow{f} B\xrightarrow{g} C\rightarrow 0$ is a $u$-$S$-exact sequence. Then there is an exact sequence  $B\xrightarrow{g} C\rightarrow T\rightarrow 0 $ where $T=\Coker(g)$ is $u$-$S$-torsion. Tensoring $F$ over $R$, we have an exact sequence $$B\otimes_RF\xrightarrow{g\otimes_RF} C\otimes_RF\rightarrow T\otimes_RF\rightarrow 0.$$
Then $T\otimes_RF$ is $u$-$S$-torsion by Corollary \ref{S-tor-tor}. Thus $0\rightarrow A\otimes_RF\xrightarrow{f\otimes_RF} B\otimes_RF\xrightarrow{g\otimes_RF} C\otimes_RF\rightarrow 0$ is  $u$-$S$-exact at $C\otimes_RF$.

There are naturally two short exact sequences: $0\rightarrow \Ker(f)\rightarrow A\rightarrow \Im(f)\rightarrow 0$, $0\rightarrow \Im(f)\rightarrow B\rightarrow \Coker(f)\rightarrow 0$, where $\Ker(f)$ is $u$-$S$-torsion. Consider the  induced exact sequence  $$\rightarrow  \Ker(f)\otimes_RF\xrightarrow{i_{\Ker(f)}\otimes_RF}  A\otimes_RF\rightarrow \Im(f)\otimes_RF\rightarrow 0,$$
$$\rightarrow  \Im(f)\otimes_RF\xrightarrow{i_{\Im(f)}\otimes_RF}  B\otimes_RF\rightarrow \Coker(f)\otimes_RF\rightarrow 0,$$
where $\Ker(i_{\Im(f)}\otimes_RF)$  and $\Ker(i_{\Ker(f)}\otimes_RF)$ are $u$-$S$-torsion. We have the following pull-back diagram:

$$\xymatrix@R=20pt@C=25pt{ & 0\ar[d]&0\ar[d]&&\\
 & \Im(i_{\Ker(f)}\otimes_RF)\ar[d]\ar@{=}[r]^{} & \Im(i_{\Ker(f)}\otimes_RF)\ar[d]& &  \\
0 \ar[r]^{} & Y                      \ar[r]^{} \ar[d] & A\otimes_RF \ar[d]\ar[r]^{} &\Im(i_{\Im(f)}\otimes_RF)\ar@{=}[d]\ar[r]^{} & 0 \\
 0 \ar[r]^{}& \Ker(i_{\Im(f)}\otimes_RF)\ar[d]\ar[r]&\Im(f)\otimes_RF\ar[r]\ar[d]&\Im(i_{\Im(f)}\otimes_RF)\ar[r] &0\\
 & 0 &0 & &\\}$$
Since $\Ker(f)$ is $u$-$S$-torsion, so is $\Ker(f)\otimes_RF$ by Corollary \ref{S-tor-tor}. Hence  $\Im(i_{\Ker(f)}\otimes_RF)$ is $u$-$S$-torsion, and thus $Y$ is also $u$-$S$-torsion by Proposition \ref{s-exct-tor}. So the composition $f\otimes_RF: A\otimes_RF \twoheadrightarrow\Im(i_{\Im(f)}\otimes_RF)\rightarrowtail B\otimes_RF$ is a $u$-$S$-monomorphism. Thus $0\rightarrow A\otimes_RF\xrightarrow{f\otimes_RF} B\otimes_RF\xrightarrow{g\otimes_RF} C\otimes_RF\rightarrow 0$ is  $u$-$S$-exact at $A\otimes_RF$.

Since the sequence $0\rightarrow A\xrightarrow{f} B\xrightarrow{g} C\rightarrow 0$ is $u$-$S$-exact at $B$, there exists  $s_1\in S$ such that   $s_1\Ker(g)\subseteq \Im(f)$ and  $s_1\Im(f)\subseteq \Ker(g)$. By $(2)$, there are two exact sequences $0\rightarrow T_1\rightarrow s_1\Ker(g)\otimes_RF\rightarrow \Im(f)\otimes_RF$ with $s_2T_1=0$ for some $s_2\in S$, and  $0\rightarrow T_2\rightarrow s_1\Im(f)\otimes_RF\rightarrow \Ker(g)\otimes_RF$ with $s_3T_2=0$ for some $s_3\in S$. Consider the induced sequence $0\rightarrow T\rightarrow \Ker(g)\otimes_RF\rightarrow B\otimes_RF\rightarrow \Coker(g)\otimes_RF\rightarrow 0$ with $s_4T=0$ for some $s_4\in S$. Set $s=s_1s_2s_3s_4$, we will show $s\Ker(g\otimes_RF)\subseteq \Im(f\otimes_RF)$ and $s\Im(f\otimes_RF)\subseteq \Ker(g\otimes_RF)$.  Consider the following exact sequence $$0\rightarrow T\rightarrow\Ker(g)\otimes_RF\xrightarrow{i_{\Ker(g)}\otimes_RF} B\otimes_RF\xrightarrow{g\otimes_RF} C\otimes_RF.$$  Then $\Im(i_{\Ker(g)}\otimes_RF)=\Ker(g\otimes_RF)$. Thus $s\Ker(g\otimes_RF)= s_1s_2s_3s_4\Ker(g\otimes_RF)=s_1s_2s_3s_4\Im(i_{\Ker(g)}\otimes_RF)\subseteq s_1s_2s_3\Ker(g)\otimes_RF\subseteq s_3\Im(f)\otimes_RF=s_3\Im(f\otimes_RF)\subseteq \Im(f\otimes_RF),$ and $s \Im(f\otimes_RF)= s_1s_2s_3s_4\Im(f)\otimes_RF\subseteq s_2s_4\Ker(g)\otimes_RF\subseteq  s_2\Im(i_{\Ker(g)}\otimes_RF)=s_2\Ker(g\otimes_RF)\subseteq \Ker(g\otimes_RF)$. Thus $0\rightarrow A\otimes_RF\rightarrow B\otimes_RF\rightarrow C\otimes_RF\rightarrow 0$ is  $u$-$S$-exact at $ B\otimes_RF$.
\end{proof}

By Corollary \ref{S-tor-tor} and Theorem \ref{s-flat-ext}, flat modules and $u$-$S$-torsion modules are  $u$-$S$-flat. And $u$-$S$-flat modules are flat provided that any element in $S$ is a unit. Moreover, if  any element in $S$ is regular and all  $u$-$S$-flat modules are flat, then any element in $S$ is a unit. Indeed, for any $s\in S$, we have $R/\langle s\rangle$ is   $u$-$S$-flat and thus  flat. So $\langle s\rangle$ is a pure ideal of $R$. By \cite[Theorem 1.2.15]{g}, there exists  $r\in R$ such that $s(1-rs)=0$. Since $s$ is regular, $s$ is a unit.

The following example shows that the condition ``$\Tor^R_1(M,F)$ is  $u$-$S$-torsion for any  $R$-module $M$'' in Theorem \ref{s-flat-ext} can not be replaced by ``$\Tor^R_1(R/I,F)$ is  $u$-$S$-torsion for any  ideal $I$ of  $R$''.
\begin{example}\label{uf not-extsion}
Let $\mathbb{Z}$ be the ring of integers, $p$ a prime in $\mathbb{Z}$ and  $S=\{p^n\mid n\geq 0\}$ as in Example \ref{exam-not-ut}. Let $M=\mathbb{Z}_{(p)}/\mathbb{Z}$. Then $\Tor^R_1(R/I,M)$ is  $u$-$S$-torsion for any  ideal $I$ of  $R$. However, $M$ is not  $u$-$S$-flat.
\end{example}
\begin{proof}
Let $\langle n\rangle$ be an ideal of $\mathbb{Z}$.   It follows from \cite[Chapter I, Lemma 6.2(a)]{FS01} that $\Tor_1^{\mathbb{Z}}(\mathbb{Z}/\langle n\rangle, M)\cong \{m\in M\mid nm=0\}=\{\frac{b}{p^a}+\mathbb{Z}\in \mathbb{Z}_{(p)}/\mathbb{Z}\mid  a, b$ satisfies $p^a|nb\}$.   Write $n=p^km$ where $(p,m)=1$.
If $k=0$, then $\Tor_1^{\mathbb{Z}}(\mathbb{Z}/\langle n\rangle, M)=0$.  If $k\geq 1$, then $\Tor_1^{\mathbb{Z}}(\mathbb{Z}/\langle n\rangle, M)=\{\frac{b}{p^k}+\mathbb{Z}\in \mathbb{Z}_{(p)}/\mathbb{Z}\mid  a, b\in \mathbb{Z}\}$. Thus $p^k\cdot\Tor_1^{\mathbb{Z}}(\mathbb{Z}/\langle n\rangle, M)=0$. So $\Tor_1^{\mathbb{Z}}(\mathbb{Z}/\langle n\rangle, M)$ is $u$-$S$-torsion for any ideal $\langle n\rangle$ of $\mathbb{Z}$.  However,
 $\Tor_1^{\mathbb{Z}}(\mathbb{Q}/\mathbb{Z}, \mathbb{Z}_{(p)}/\mathbb{Z})\cong t(\mathbb{Z}_{(p)}/\mathbb{Z})= \mathbb{Z}_{(p)}/\mathbb{Z}$  by \cite[Chapter I, Lemma 6.2(b)]{FS01}. Since $\mathbb{Z}_{(p)}/\mathbb{Z}$ is not  $u$-$S$-torsion by Example \ref{exam-not-ut},
 $M=\mathbb{Z}_{(p)}/\mathbb{Z}$ is not  $u$-$S$-flat.
\end{proof}

\begin{proposition}\label{s-flat-exac}
Let $R$ be a ring and $S$ a multiplicative subset of $R$. Then the following statements hold.
\begin{enumerate}
\item Any pure quotient of  $u$-$S$-flat modules is  $u$-$S$-flat.
\item Any finite direct sum of  $u$-$S$-flat modules is  $u$-$S$-flat.
\item Let $0\rightarrow A\xrightarrow{f} B\xrightarrow{g} C\rightarrow 0$  be a $u$-$S$-exact sequence. If $A$ and $C$ are  $u$-$S$-flat modules, so is $B$.
\item  Let $A\rightarrow B$ be a $u$-$S$-isomorphism. If one of $A$ and $B$ is  $u$-$S$-flat, so is the other.
\item Let $0\rightarrow A\xrightarrow{f} B\xrightarrow{g} C\rightarrow 0$  be a $u$-$S$-exact sequence.  If $B$ and $C$ are  $u$-$S$-flat, then $A$ is  $u$-$S$-flat.
\end{enumerate}
\end{proposition}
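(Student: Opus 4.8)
The plan is to convert the merely $u$-$S$-exact hypothesis into genuine short exact sequences, transport $u$-$S$-flatness across $u$-$S$-isomorphisms by means of part $(4)$, and finish with a standard dimension-shifting argument through the $\Tor$-criterion of Theorem \ref{s-flat-ext}. To set up, write $K=\Ker(f)$, $I=\Im(f)$, $J=\Ker(g)$ and $Q=\Coker(g)$. Since the sequence is $u$-$S$-exact, $K$ and $Q$ are $u$-$S$-torsion, and there is an element $s\in S$ with $sI\subseteq J$ and $sJ\subseteq I$.

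First I would dispose of the two obvious reductions. The corestriction $A\twoheadrightarrow I$ is surjective with kernel $K$, hence a $u$-$S$-isomorphism, so by $(4)$ it suffices to prove that $I$ is $u$-$S$-flat. Symmetrically, the inclusion $\Im(g)\hookrightarrow C$ has cokernel $Q$, so it too is a $u$-$S$-isomorphism and $(4)$ yields that $\Im(g)$ is $u$-$S$-flat.

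The crux is to produce a genuine short exact sequence built from $I$ whose quotient term is $u$-$S$-flat. I would use the exact sequence $0\to I\to B\to B/I\to 0$ and argue that $B/I$ is $u$-$S$-flat. For this, consider the multiplication-by-$s$ map $\mu\colon B/I\to B/J$, $b+I\mapsto sb+J$, which is well defined because $sI\subseteq J$, together with the canonical isomorphism $B/J\cong \Im(g)$ induced by $g$. Checking that $\mu$ is a $u$-$S$-isomorphism, even though $I$ and $J$ are in general incomparable, is the heart of the matter: its kernel is annihilated by $s^2$ (if $sb\in J$, then $s^2b\in sJ\subseteq I$), and its cokernel $B/(sB+J)$ is annihilated by $s$. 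Hence $B/I$ is $u$-$S$-isomorphic to $\Im(g)$, and $(4)$ gives that $B/I$ is $u$-$S$-flat.

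Finally I would apply the $\Tor$ long exact sequence to $0\to I\to B\to B/I\to 0$. For every $R$-module $M$, Theorem \ref{s-flat-ext} shows that $\Tor_2^R(M,B/I)$ and $\Tor_1^R(M,B)$ are $u$-$S$-torsion, and the exact fragment
$$\Tor_2^R(M,B/I)\to \Tor_1^R(M,I)\to \Tor_1^R(M,B)$$
presents $\Tor_1^R(M,I)$ as an extension of a submodule of $\Tor_1^R(M,B)$ by a quotient of $\Tor_2^R(M,B/I)$; both ends are $u$-$S$-torsion, so $\Tor_1^R(M,I)$ is $u$-$S$-torsion by Proposition \ref{s-exct-tor}(4). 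Thus $I$ is $u$-$S$-flat by Theorem \ref{s-flat-ext}, and therefore so is $A$. The step I expect to be the main obstacle is the bridging map $\mu$: because $\Im(f)$ and $\Ker(g)$ need not be nested, the only comparison available is multiplication by $s$, and verifying that the induced map on quotients has $u$-$S$-torsion kernel and cokernel is precisely where the relations $sI\subseteq J$ and $sJ\subseteq I$ are genuinely used.
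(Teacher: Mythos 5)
Your proposal proves only item (5); items (1)--(3) are never addressed, and item (4), on which both of your reductions lean, is itself left unproved. That is the one genuine gap: as written, your proof of (5) is conditional on (4), and (4) is not free. The dependence is at least not circular: (4) follows from (3) by applying (3) to the $u$-$S$-exact sequences $0\to A\to B\to 0\to 0$ and $0\to 0\to A\to B\to 0$ (or, more directly, from the $\Tor$-criterion of Theorem \ref{s-flat-ext} together with Corollary \ref{S-tor-tor} applied to $0\to\Ker(f)\to A\to\Im(f)\to 0$ and $0\to\Im(f)\to B\to\Coker(f)\to 0$). But you must make that order of dependence explicit and still supply (1)--(3): (1) holds because purity makes the connecting map $\Tor_1^R(M,C)\to M\otimes_RA$ vanish, so $\Tor_1^R(M,B)\to\Tor_1^R(M,C)$ is onto; (2) holds because $\Tor$ commutes with finite direct sums; (3) requires an argument of the same flavour as your (5) but, if you keep the paper's derivation of (4) from (3), it cannot itself invoke (4).

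For item (5) the argument you give is correct and takes a genuinely different, and arguably cleaner, route than the paper's. The paper reduces to showing $\Tor_1^R(M,\Ker(g))$ is $u$-$S$-torsion by embedding $\Ker(g)$ and $\Im(f)$ in $N=\Ker(g)+\Im(f)$ and comparing two $\Tor$ sequences; you instead build the explicit multiplication-by-$s$ map $\mu\colon B/\Im(f)\to B/\Ker(g)\cong\Im(g)$ and verify it is a $u$-$S$-isomorphism, the kernel being killed by $s^2$ (via $s\Im(f)\subseteq\Ker(g)$ and $s\Ker(g)\subseteq\Im(f)$) and the cokernel $B/(sB+\Ker(g))$ by $s$. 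This converts the $u$-$S$-exact datum into the honest short exact sequence $0\to\Im(f)\to B\to B/\Im(f)\to 0$ with $u$-$S$-flat third term, and the final shift through $\Tor_2^R(M,B/\Im(f))\to\Tor_1^R(M,\Im(f))\to\Tor_1^R(M,B)$ is sound, since Theorem \ref{s-flat-ext} permits the annihilating element to depend on $M$. Complete the missing items and record the non-circularity of your appeal to (4), and the proof stands.
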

\begin{proof} $(1)$ Let $0\rightarrow A\rightarrow B\rightarrow C\rightarrow 0$ be a pure exact sequence with $B$ $u$-$S$-flat.
Let $M$ be an $R$-module. Then there is an exact sequence  $ \Tor_1^R(M,B) \rightarrow \Tor_1^R(M,C)\rightarrow 0$. Since $\Tor_1^R(M,B)$ is $u$-$S$-torsion, $\Tor_1^R(M,C)$ also  is $u$-$S$-torsion. Thus $C$ is  $u$-$S$-flat.

$(2)$ Let $F_1,...,F_n$ be  $u$-$S$-flat modules. Let $M$ be an $R$-module. Then there exists  $s_i\in S$ such that  $s_i\Tor_1^R(M,F_i)=0$. Set $s=s_1...s_n$. Then $s\Tor_1^R(M,\bigoplus\limits_{i=1}^n F_i)\cong \bigoplus\limits_{i=1}^ns\Tor_1^R(M, F_i)=0$. Thus $\bigoplus\limits_{i=1}^n F_i$ is  $u$-$S$-flat.

$(3)$ Let $0\rightarrow A\xrightarrow{f} B\xrightarrow{g} C\rightarrow 0$ be a $u$-$S$-exact sequence. Then there are three short exact sequences: $0\rightarrow \Ker(f)\rightarrow A\rightarrow \Im(f)\rightarrow 0$, $0\rightarrow \Ker(g)\rightarrow B\rightarrow \Im(g)\rightarrow 0$ and $0\rightarrow \Im(g)\rightarrow C\rightarrow \Coker(g)\rightarrow 0$. Then  $\Ker(f)$ and $\Coker(g)$ are all $u$-$S$-torsion  and $s\Ker(g)\subseteq \Im(f)$ and $s\Im(f)\subseteq \Ker(g)$ for some $s\in S$.  Let $M$ be an $R$-module. Suppose $A$ and $C$ are  $u$-$S$-flat.  Then $$\Tor_1^R(M,A)\rightarrow \Tor_1^R(M,\Im(f)) \rightarrow M\otimes_R\Ker(f)$$ is exact. Since $\Ker(f)$ is $u$-$S$-torsion and $A$ is  $u$-$S$-flat, it follows that  $\Tor_1^R(M,\Im(f))$ is $u$-$S$-torsion.
Note $$\Tor_2^R(M,\Coker(g))\rightarrow \Tor_1^R(M,\Im(g))\rightarrow \Tor_1^R(M,C)$$  is exact. Since $\Coker(g)$ is   $u$-$S$-torsion, then $\Tor_2^R(M,\Coker(g))$ is  $u$-$S$-torsion by Corollary \ref{S-tor-tor}. Thus $\Tor_1^R(M,\Im(g))$ is  $u$-$S$-torsion as $\Tor_1^R(M,C)$ is $u$-$S$-torsion. We also note that
$$\Tor_1^R(M,\Ker(g))\rightarrow \Tor_1^R(M,B) \rightarrow \Tor_1^R(M,\Im(g))$$ is exact. Thus to verify $\Tor_1^R(M,B)$ is $u$-$S$-torsion, we just need to show $\Tor_1^R(M,\Ker(g))$ is $u$-$S$-torsion. Set $N= \Ker(g)+\Im(f)$.
Consider the following two exact sequences
\begin{center}
$0\rightarrow \Ker(g)\rightarrow N\rightarrow N/\Ker(g)\rightarrow 0$ and $0\rightarrow \Im(f)\rightarrow N\rightarrow  N/\Im(f)\rightarrow 0.$
\end{center}
Then it is easy to verify $N/\Ker(g)$ and $N/\Im(f)$ are all  $u$-$S$-torsion. Consider the following induced two exact sequences
$$\Tor_2^R(M,N/\Im(f))\rightarrow \Tor_1^R(M,\Ker(g)) \rightarrow \Tor_1^R(M,N)  \rightarrow \Tor_1^R(M, N/\Im(f)),$$ $$\Tor_2^R(M,N/\Ker(g)) \rightarrow \Tor_1^R(M,\Im(f)) \rightarrow \Tor_1^R(M,N) \rightarrow \Tor_1^R(M, N/\Ker(g)).$$ Thus $\Tor_1^R(M,\Ker(g))$ is $u$-$S$-torsion if and only if  $\Tor_1^R(M,\Im(f))$ is $u$-$S$-torsion. Consequently,  $B$ is  $u$-$S$-flat since $\Tor_1^R(M,\Im(f))$ is proved to be $u$-$S$-torsion as above.

$(4)$ It can be certainly deduced from $(3)$.

(5) Let $0\rightarrow A\xrightarrow{f} B\xrightarrow{g} C\rightarrow 0$ be a $u$-$S$-exact sequence. Then, as in the proof of $(3)$, there are three short exact sequences: $0\rightarrow \Ker(f)\rightarrow A\rightarrow \Im(f)\rightarrow 0$, $0\rightarrow \Ker(g)\rightarrow B\rightarrow \Im(g)\rightarrow 0$ and $0\rightarrow \Im(g)\rightarrow C\rightarrow \Coker(g)\rightarrow 0$. Then  $\Ker(f)$ and $\Coker(g)$ are all $u$-$S$-torsion  and $s\Ker(g)\subseteq \Im(f)$ and $s\Im(f)\subseteq \Ker(g)$ for some $s\in S$. Let $M$ be an $R$-module. Note that $$\Tor_1^R(M,\Ker(f))\rightarrow\Tor_1^R(M,A)\rightarrow \Tor_1^R(M,\Im(f)) \rightarrow M\otimes_R\Ker(f)$$ is exact. Since $\Ker(f)$ is $u$-$S$-torsion,  then $\Tor_1^R(M,\Ker(f))$ and $M\otimes_R\Ker(f)$ are  $u$-$S$-torsion by Corollary \ref{S-tor-tor}. It just need to verify $\Tor_1^R(M,\Im(f))$ is  $u$-$S$-torsion. By the proof of $(3)$, we just need to show $\Tor_1^R(M,\Ker(g))$ is  $u$-$S$-torsion.  Since  $$ \Tor_2^R(M,\Im(g))\rightarrow \Tor_1^R(M,\Ker(g))\rightarrow \Tor_1^R(M,B)$$ is exact and $\Tor_1^R(M,B)$  is $u$-$S$-torsion, we just need to show $\Tor_2^R(M,\Im(g))$ is $u$-$S$-torsion. Note that $$\Tor_3^R(M,\Coker(g))\rightarrow  \Tor_2^R(M,\Im(g))\rightarrow \Tor_2^R(M,C)$$  is exact. Since $\Coker(g)$ is $u$-$S$-torsion and $C$ is $u$-$S$-flat, we have $\Tor_3^R(M,\Coker(g)) $ and $\Tor_2^R(M,C)$ are $u$-$S$-torsion. So  $\Tor_2^R(M,\Im(g))$ is $u$-$S$-torsion.
\end{proof}

\begin{remark}\label{s-dirlimt-not}  It is well known that any direct limit of flat modules is flat. However, every direct limit of  $u$-$S$-flat  modules is not  $u$-$S$-flat. Let $\mathbb{Z}$ be the ring of integers, $p$ a prime in $\mathbb{Z}$ and  $S=\{p^n\mid n\geq 0\}$ as in Example \ref{uf not-extsion}.  Let $F_n=\mathbb{Z}/\langle p^n\rangle$ be a $\mathbb{Z}$-module. Then $F_n$ is $u$-$S$-torsion, and thus   $u$-$S$-flat. Note that each $F_n$ is isomorphic to $M_n=\{\frac{a}{p^n}+\mathbb{Z}\in \mathbb{Z}_{(p)}/\mathbb{Z}\mid a\in \mathbb{Z}\}$. It is easy to verify $\mathbb{Z}_{(p)}/\mathbb{Z}=\bigcup\limits_{i=1}^{\infty}M_n \cong\lim\limits_{\longrightarrow}F_n$.  However,  $\mathbb{Z}_{(p)}/\mathbb{Z}$ is not  $u$-$S$-flat (see Example \ref{uf not-extsion}).

It is also worth noting infinite direct sums of  $u$-$S$-flat  modules need not be $u$-$S$-flat. Let $M_n=\{\frac{a}{p^n}+\mathbb{Z}\in \mathbb{Z}_{(p)}/\mathbb{Z}\mid a\in \mathbb{Z}\}$ as above. Then $M_n$ is  $u$-$S$-flat. Set $N=\bigoplus\limits_{n=1}^{\infty}M_n$. Then $N$ is a torsion module. Thus $\Tor_1^{\mathbb{Z}}(\mathbb{Q}/\mathbb{Z},N)=N $  by \cite[Chapter I, Lemma 6.2(b)]{FS01}. It can similarly be deduced from the proof of Example \ref{exam-not-ut} that $N$ is not $u$-$S$-torsion. Thus $N$ is not  $u$-$S$-flat.
\end{remark}

\begin{corollary}\label{s-flat-loc}
Let $R$ be a ring and $S$ a multiplicative subset of $R$. If $F$ is  $u$-$S$-flat over a ring $R$, then $F_S$ is flat over $R_S$.
\end{corollary}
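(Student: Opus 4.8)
The plan is to reduce the statement to the Tor-characterization of $u$-$S$-flatness from Theorem \ref{s-flat-ext} and then exploit the exactness of localization. Recall that $F_S$ is flat over $R_S$ precisely when $\Tor_1^{R_S}(N,F_S)=0$ for every $R_S$-module $N$, so the entire task is to verify this vanishing for an arbitrary $R_S$-module $N$.

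The central computational input I would use is that localization commutes with Tor: since $R_S$ is a flat $R$-module, for any $R$-module $N$ one has a natural isomorphism
$$\Tor_1^{R_S}(N_S,F_S)\cong \big(\Tor_1^R(N,F)\big)_S.$$
Now I would take $N$ to be any $R_S$-module and regard it as an $R$-module by restriction of scalars. The key point is that for such $N$ the canonical map $N\to N_S$ is an isomorphism, because $N$ is already an $R_S$-module and every $s\in S$ acts invertibly; hence $N_S\cong N$ and the displayed isomorphism reads $\Tor_1^{R_S}(N,F_S)\cong(\Tor_1^R(N,F))_S$.

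To finish, I would invoke the hypothesis through Theorem \ref{s-flat-ext}: since $F$ is $u$-$S$-flat, $\Tor_1^R(N,F)$ is $u$-$S$-torsion, so there exists $s\in S$ with $s\,\Tor_1^R(N,F)=0$. Any $u$-$S$-torsion module localizes to zero at $S$, for if $sT=0$ with $s\in S$, then for each $t/s'\in T_S$ we have $t/s'=st/(ss')=0$, whence $T_S=0$. Applying this to $T=\Tor_1^R(N,F)$ gives $\Tor_1^{R_S}(N,F_S)\cong(\Tor_1^R(N,F))_S=0$. As $N$ was an arbitrary $R_S$-module, $F_S$ is flat over $R_S$.

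I do not anticipate a serious obstacle here; the two facts that carry the argument are standard, namely that localization commutes with $\Tor$ and that $N_S\cong N$ when $N$ is already an $R_S$-module. The only point requiring a line of care is the identification $\Tor_1^{R_S}(N,F_S)\cong(\Tor_1^R(N,F))_S$ for $N$ viewed over $R$, and the elementary observation that a uniformly $S$-torsion module dies under localization at $S$, which is exactly what converts the ``$u$-$S$-torsion'' conclusion of Theorem \ref{s-flat-ext} into genuine vanishing after localization.
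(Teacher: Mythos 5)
Your proof is correct and follows essentially the same route as the paper: both arguments combine the Tor-characterization of $u$-$S$-flatness from Theorem \ref{s-flat-ext} with the fact that localization commutes with $\Tor$ and annihilates $u$-$S$-torsion modules. The only (immaterial) difference is that the paper tests flatness of $F_S$ against the cyclic modules $R_S/I_S$ for finitely generated ideals $I_S$, whereas you test against an arbitrary $R_S$-module $N$ via restriction of scalars and the identification $N\cong N_S$.
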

\begin{proof} Let $I_S$ be a finitely generated ideal of $R_S$, where $I$ is a finitely generated ideal of $R$. Then there exists  $s\in S$ such that  $s\Tor_1^R(R/I,F)=0.$ Thus $0=\Tor_1^R(R/I,F)_S\cong \Tor_1^{R_S}(R_S/I_S,F_S)$. So $F_S$ is flat over $R_S$.
\end{proof}


\begin{remark}\label{s-flat-not}
Note that the converse of Corollary \ref{s-flat-loc} does not hold. Consider  $\mathbb{Z}$-module $M=\mathbb{Z}_{(p)}/\mathbb{Z}$ in  Example \ref{exam-not-ut}. Let $S=\{p^n\mid n\geq 0\}$. Then $M_S=0$ and thus is  flat over $\mathbb{Z}_S$. However, $M$ is not  $u$-$S$-flat over $\mathbb{Z}$ (see  Example \ref{uf not-extsion}).
\end{remark}

\begin{proposition} Let $R$ be a ring and $F$ an $R$-module. Let $S$ be a  multiplicative subset of $R$ consisting of finite  elements. Then $F$ is  $u$-$S$-flat over a ring $R$ if and only if  $F_S$ is flat over $R_S$.
\end{proposition}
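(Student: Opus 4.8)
The plan is to treat the two implications separately, noting at the outset that the forward direction needs no finiteness hypothesis at all. Indeed, if $F$ is $u$-$S$-flat over $R$, then Corollary \ref{s-flat-loc} already delivers that $F_S$ is flat over $R_S$, so that half is free.

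For the converse, I would assume $F_S$ is flat over $R_S$ and write $S=\{s_1,\dots,s_n\}$. By Theorem \ref{s-flat-ext} it suffices to show that $\Tor_1^R(M,F)$ is $u$-$S$-torsion for every $R$-module $M$. First I would exploit that localization is exact and commutes with $\Tor$, so that
$$\Tor_1^R(M,F)_S\cong \Tor_1^{R_S}(M_S,F_S)=0,$$
the right-hand side vanishing precisely because $F_S$ is $R_S$-flat. Hence $\Tor_1^R(M,F)_S=0$, which says exactly that $\Tor_1^R(M,F)$ is $S$-torsion: every element is annihilated by some member of $S$.

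The step that genuinely uses the finiteness of $S$ is the upgrade from $S$-torsion to $u$-$S$-torsion. This is furnished by Proposition \ref{fin-S-u-tor}: when $S$ is finite, a module is $S$-torsion if and only if it is $u$-$S$-torsion (concretely, the single element $s=s_1\cdots s_n$ annihilates the whole module). Applying this to $\Tor_1^R(M,F)$ shows it is $u$-$S$-torsion, and then Theorem \ref{s-flat-ext} gives that $F$ is $u$-$S$-flat, finishing the converse.

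I do not anticipate any real obstacle; the argument is a short assembly of results already established. The only points deserving care are the routine passage from $\Tor_1^R(M,F)_S=0$ to ``$S$-torsion,'' and the recognition that the finiteness assumption enters solely through Proposition \ref{fin-S-u-tor}. It is precisely this hypothesis that rescues the converse, which by Remark \ref{s-flat-not} is false for a general multiplicative set.
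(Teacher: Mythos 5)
Your proposal is correct and follows essentially the same route as the paper: both implications are handled identically, with the converse reducing via localization (flat base change for $\Tor$, which the paper itself uses in Corollary \ref{s-flat-loc}) to the statement that the obstruction module is $S$-torsion, and then upgraded to $u$-$S$-torsion by Proposition \ref{fin-S-u-tor}. The only cosmetic difference is that you verify condition (3) of Theorem \ref{s-flat-ext} ($\Tor_1^R(M,F)$ is $u$-$S$-torsion) whereas the paper verifies condition (2) (the kernel $T$ of $f\otimes_RF$ is $u$-$S$-torsion); the underlying mechanism is the same.
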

\begin{proof} We just need to show that if $F_S$ is flat over $R_S$, then $F$ is  $u$-$S$-flat over a ring $R$. Let $0\rightarrow A\xrightarrow{f} B\rightarrow C\rightarrow 0$ be a short exact sequence over $R$. By tensoring $F$, we have an exact sequence $0\rightarrow T\rightarrow A\otimes_RF\xrightarrow{f\otimes_RF} B\otimes_RF\rightarrow C\otimes_RF\rightarrow 0$ where $T$ is the kernel of $f\otimes_RF$. By tensoring $R_S$, we have an exact sequence $0\rightarrow T_S\rightarrow A_S\otimes_{R_S}F_S\rightarrow B_S\otimes_{R_S}F_S\rightarrow C_S\otimes_{R_S}F_S\rightarrow 0$ over $R_S$. Since $F_S$ is flat over $R_S$, $T_S=0$. Thus $T$ is $S$-torsion. By Proposition \ref{fin-S-u-tor}, $T$ is $u$-$S$-torsion. So $F$ is  $u$-$S$-flat over a ring $R$.
\end{proof}

Let $\p$ be a prime ideal of $R$. We say an $R$-module $F$ is \emph{$u$-$\p$-flat} shortly provided that  $F$ is  $u$-$(R\setminus\p)$-flat.
\begin{proposition}\label{s-flat-loc-char}
Let $R$ be a ring and $F$ an $R$-module. Then the following statements are equivalent:
 \begin{enumerate}
\item  $F$ is flat;
\item   $F$ is    $u$-$\p$-flat for any $\p\in \Spec(R)$;
\item   $F$ is   $u$-$\m$-flat for any $\m\in \Max(R)$.
 \end{enumerate}
\end{proposition}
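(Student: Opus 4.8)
The plan is to establish the cycle $(1)\Rightarrow(2)\Rightarrow(3)\Rightarrow(1)$, with only the last implication carrying any real content. For $(1)\Rightarrow(2)$, recall the observation recorded right after Theorem \ref{s-flat-ext} that every flat module is $u$-$S$-flat for an arbitrary multiplicative subset $S$: if $F$ is flat then $\Tor_1^R(M,F)=0$ for every $R$-module $M$, which is trivially $u$-$S$-torsion, so $F$ is $u$-$S$-flat by Theorem \ref{s-flat-ext}. Taking $S=R\setminus\p$ for each prime ideal $\p$ yields $(2)$, and $(2)\Rightarrow(3)$ is immediate because every maximal ideal is prime.

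For $(3)\Rightarrow(1)$, the strategy is to turn flatness into a local statement and then invoke Corollary \ref{s-flat-loc}. Fix $\m\in\Max(R)$ and apply Corollary \ref{s-flat-loc} with $S=R\setminus\m$: since $F$ is $u$-$\m$-flat by hypothesis, the localization $F_\m=F_{R\setminus\m}$ is flat over $R_\m$. Hence $(3)$ provides that $F_\m$ is a flat $R_\m$-module for every maximal ideal $\m$.

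It remains to descend this local flatness to $R$. By the characterization recalled before Theorem \ref{s-flat-ext}, it suffices to prove $\Tor_1^R(M,F)=0$ for every $R$-module $M$. Since $R_\m$ is flat over $R$, the functor $\Tor$ commutes with localization, so for each maximal ideal $\m$ we have
$$\Tor_1^R(M,F)_\m\cong\Tor_1^{R_\m}(M_\m,F_\m)=0,$$
the vanishing coming from flatness of $F_\m$. An $R$-module whose localization at every maximal ideal is zero must itself be zero; therefore $\Tor_1^R(M,F)=0$ for all $M$, and $F$ is flat.

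I expect no serious obstacle: once Corollary \ref{s-flat-loc} delivers flatness of each $F_\m$, the remaining argument is the classical local-global principle for flatness. The only steps meriting a moment's care are the identification $F_{R\setminus\m}=F_\m$ together with the isomorphism $\Tor_1^R(M,F)_\m\cong\Tor_1^{R_\m}(M_\m,F_\m)$, and the fact that a module vanishing at all maximal ideals is zero; all of these are standard.
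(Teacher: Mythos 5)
Your proof is correct, but the key implication $(3)\Rightarrow(1)$ is handled differently from the paper. The paper works directly with the uniform annihilator: for each $\m$, Theorem \ref{s-flat-ext} gives an element $s_\m\in R\setminus\m$ with $s_\m\Tor_1^R(M,F)=0$, and since the ideal generated by all the $s_\m$ lies in no maximal ideal it equals $R$, forcing $\Tor_1^R(M,F)=0$; no localization of rings or modules is needed. You instead pass through Corollary \ref{s-flat-loc} to get flatness of each $F_\m$ over $R_\m$ and then invoke the classical local-global principle for flatness via $\Tor_1^R(M,F)_\m\cong\Tor_1^{R_\m}(M_\m,F_\m)$. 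Both arguments are valid and end with the same local-to-global step (a module killed outside every maximal ideal, equivalently vanishing at every maximal ideal, is zero), but note that your route deliberately discards the uniformity: it only uses the consequence that $F_\m$ is flat, which by Remark \ref{s-flat-not} is strictly weaker than $u$-$\m$-flatness for a fixed $\m$. This shows your argument actually proves the formally stronger statement that local flatness at all maximal ideals suffices --- which is just the classical criterion --- whereas the paper's proof is more self-contained, staying entirely within the $u$-$S$-torsion framework and avoiding the Tor-localization isomorphism. The standard facts you rely on (flatness of $R_\m$ over $R$, commutation of $\Tor$ with localization, and vanishing of a module that is locally zero) are all unproblematic.
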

\begin{proof} $(1)\Rightarrow (2)\Rightarrow (3):$  Trivial.

 $(3)\Rightarrow (1):$ Let $M$ be an $R$-module. Then $\Tor_1^R(M,F)$ is $(R\setminus\m)$-torsion. Thus for any $\m\in \Max(R)$, there exists  $s_{\m}\in S$ such that $s_{\m}\Tor_1^R(M,F)=0$. Since the ideal generated by all $s_{\m}$ is $R$, $\Tor_1^R(M,F)=0$. So $F$ is flat.
\end{proof}

Recall that a ring $R$ is called \emph{von Neumann regular} provided that for any $a\in R$, there exists  $r\in R$ such that $a=ra^2$. One of the main topics  is the $S$-analogue of von Neumann regular rings. In order to study further, we will  characterize when a ring $R_S$ is  von Neumann regular in the next result.

\begin{proposition}\label{s-inj-ext}
Let $R$ be a ring and $S$ a multiplicative subset of $R$. The following statements are equivalent:
\begin{enumerate}
\item  $R_S$ is a von Neumann regular ring;
\item   any principal ideal of $R$ is $S$-generated by an idempotent;
\item   any $S$-finite ideal of $R$ is $S$-generated by an idempotent;
\item  for any $a\in R$, there exist  $s\in S$ and  $r\in R$ such that $sa=ra^2$;
\item   any $R_S$-module is flat over $R_S$.
\end{enumerate}
\end{proposition}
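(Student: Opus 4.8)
The plan is to route every condition through the localization $R_S$ together with the classical characterization of von Neumann regular rings: for any commutative ring $T$ the properties ``$T$ is von Neumann regular'', ``every principal ideal of $T$ is generated by an idempotent'', ``every finitely generated ideal of $T$ is generated by an idempotent'', and ``every $T$-module is flat'' are all equivalent. Applying this to $T=R_S$ reduces the whole proposition to translating each condition on $R$ into the corresponding condition on $R_S$. In particular $(1)\Leftrightarrow(5)$ is literally the equivalence ``$R_S$ von Neumann regular $\Leftrightarrow$ every $R_S$-module is flat'' from this classical list, since $(5)$ is stated directly for $R_S$-modules; I would just invoke it (see \cite{fk16}).

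For $(1)\Leftrightarrow(4)$ I would argue at the level of elements by clearing denominators. A ring is von Neumann regular exactly when every element has a quasi-inverse, and since $a/s=(1/s)(a/1)$ differs from $a/1$ by the unit $1/s$, it suffices to treat elements $a/1$ (quasi-invertibility is unaffected by multiplication by a unit, which is routine). Now $a/1=(r/t)(a/1)^2$ in $R_S$ for some $r\in R,\ t\in S$ if and only if $u(ta-ra^2)=0$ for some $u\in S$; writing $s=ut\in S$ and absorbing $u$ into $r$, this says precisely $sa=r'a^2$ for some $s\in S,\ r'\in R$, which is $(4)$, and the converse reads off the same computation.

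The reformulations $(2)$ and $(3)$ must relate idempotent $S$-generators in $R$ to idempotent generators in $R_S$. Here $(3)\Rightarrow(2)$ is immediate, a principal ideal being $S$-finite; and $(2)\Rightarrow(4)$ is a short computation: if $e\in\langle a\rangle$ is idempotent with $s\langle a\rangle\subseteq\langle e\rangle$, write $e=xa$, so idempotency gives $e=e^2=x^2a^2$, while $sa\in\langle e\rangle$ gives $sa=ye$, whence $sa=yx^2a^2$ and $(4)$ holds with $r=yx^2$. To close the cycle I would prove $(4)\Rightarrow(3)$: given an $S$-finite ideal $I$ with $sI\subseteq\langle a_1,\dots,a_n\rangle\subseteq I$, it suffices to $S$-generate each $\langle a_i\rangle$ by an idempotent $e_i$ and then combine them into a single idempotent using the identity $\langle e\rangle+\langle f\rangle=\langle e+f-ef\rangle$ applied up to $S$-torsion.

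The main obstacle is precisely this base case: producing a \emph{genuine} idempotent of $R$ lying in $\langle a\rangle$ that $S$-generates it. Starting from $sa=ra^2$ the natural candidate is $c=ra\in\langle a\rangle$, which satisfies $c^2=r^2a^2=r(sa)=sc$ and $ca=sa$; thus $c/s$ is an honest idempotent of $R_S$ generating $\langle a\rangle_S$, but in $R$ itself $c$ is only an ``$s$-idempotent'' ($c^2=sc$), and the powers of $s$ cannot in general be cleared. The delicate step is therefore to upgrade $c$ to a true idempotent $e\in\langle a\rangle$ with $s'\langle a\rangle\subseteq\langle e\rangle$ for some $s'\in S$, which I expect to handle by lifting the idempotent $c/s$ of the von Neumann regular ring $R_S$ back along $R\to R_S$ and checking that the lift can be taken inside $\langle a\rangle$ after multiplying by a suitable element of $S$. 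This lifting, rather than any of the homological or arithmetic steps, is where the real care is required.
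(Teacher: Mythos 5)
Your handling of $(1)\Leftrightarrow(4)\Leftrightarrow(5)$, of $(3)\Rightarrow(2)$, and of $(2)\Rightarrow(4)$ coincides with the paper's proof and is correct. The genuine gap is exactly the step you yourself flag as delicate: upgrading the element $c=ra$ (which satisfies only $c^2=sc$) to a true idempotent $e\in\langle a\rangle$ with $s'\langle a\rangle\subseteq\langle e\rangle$. This step cannot be completed by lifting the idempotent $c/s$ from $R_S$, nor by any other means, because the implication $(4)\Rightarrow(2)$ is false as literally stated: take $R=\mathbb{Z}$ and $S=\mathbb{Z}\setminus\{0\}$, so that $R_S=\mathbb{Q}$ is von Neumann regular and $(1)$, $(4)$, $(5)$ all hold; yet the only idempotent of $\mathbb{Z}$ lying in $\langle 2\rangle$ is $0$, and $s\langle 2\rangle\subseteq\langle 0\rangle$ fails for every $s\in S$, so $\langle 2\rangle$ is not $S$-generated by an idempotent. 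In your proposed lifting, the relevant idempotent of $R_S$ generating $\langle 2\rangle_S=\mathbb{Q}$ is $1$, which does lift to $\mathbb{Z}$ but cannot be brought inside $\langle 2\rangle$; this is the obstruction in general.

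What you have actually uncovered is a defect in the paper's own argument. In its proof of $(4)\Rightarrow(2)$ the paper sets $e=ra$, computes $se=e^2$, and then treats $e$ as an idempotent generator, which it is not --- it is precisely your ``$s$-idempotent'' --- while its proofs of $(2)\Rightarrow(3)$ and $(2)\Rightarrow(4)$ genuinely use $e=e^2$. Read literally, conditions $(2)$ and $(3)$ are strictly stronger than $(1)$, $(4)$, $(5)$, and the equivalence fails. The proposition can be repaired by replacing ``idempotent'' in $(2)$ and $(3)$ with ``element $e$ satisfying $se=e^2$ for some $s\in S$'' (your computation $sa=ye=yx^2a^2$ still goes through with minor adjustment, as does the combination step for finitely many generators after multiplying by enough elements of $S$). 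So your instinct to isolate this point as the crux was correct, but the conclusion is that the step is not merely delicate: it is impossible, and the remaining equivalences $(1)\Leftrightarrow(4)\Leftrightarrow(5)$ are the part of the statement that survives unmodified.
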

\begin{proof}

$(1)\Leftrightarrow (5):$ It is well known. $(3)\Rightarrow (2):$ Trivial.

$(1)\Rightarrow (4):$ Let $a\in R$. Then there exists $\frac{r_1}{s_1}\in R_S$ such that $\frac{a}{1}=\frac{r_1}{s_1}\frac{a^2}{1}$. Thus there  exists  $s_2\in S$ such that $s_1s_2a=s_2r_1a^2$.  Set $s=s_1s_2$ and $r=s_2r_1$, $(4)$ holds naturally.

$(4)\Rightarrow (1):$  Let $\frac{a}{s}$ be an element in $R_S$. Then there are $s'\in S$ and $x\in R$ such that $s'a=xa^2$. Thus $\frac{a}{s}=\frac{sx}{s'}(\frac{a}{s})^2$. So  $R_S$ is a von Neumann regular ring.


$(4)\Rightarrow (2):$ Let $\langle a\rangle$ be a principal ideal of $R$. Then there exists  $s\in S$ such that $sa=ra^2$ for some $r\in R$. Set $e=ra$. Then $se=e^2$ and  $e\in \langle a\rangle$. Since $sa=ea\in \langle e\rangle$, we have $s\langle a\rangle\subseteq \langle e\rangle\subseteq \langle a\rangle$.

$(2)\Rightarrow (3):$ Let $K$ be an  $S$-finite ideal and   $I=Ra_1+\cdots+Ra_n$ be a finitely generated sub-ideal of $I$ such that $s'K\subseteq I$ for some $s'\in S$. By $(2)$, for each $i$ there is an  idempotent $e_i\in Ra_i$ such that $s_i\langle a_i\rangle\subseteq \langle e_i\rangle$ for some $s_i\in S$ $(i=1,\cdots ,n)$.
Set $s=s's_1\cdots s_n$.  Then $s\langle a_i\rangle\subseteq \langle e_i\rangle$. Set $J=Re_1+\cdots +Re_n$.  Then $J$ is a sub-ideal of $I$ (thus of $K$) such that $sK\subseteq s_1\cdots s_n I \subseteq J$. Claim that $J$ is generated by an idempotent. Indeed, for any $x\in J$, we have $x=r_1e_1+\cdots +r_ne_n=r_1e^2_1+\cdots +r_ne^2_n\in J^2$. Thus $J^2=J$. Since $J$ is finitely generated, $J=\langle e\rangle$ for some idempotent $e\in I$ by \cite[Theorem 1.8.22]{fk16}.

$(2)\Rightarrow (4):$ Let $a\in R$. Then there is an idempotent $e$ such that $s\langle a\rangle \subseteq \langle e\rangle \subseteq \langle a\rangle$. If $e=ba$ for some $b \in R$, then $e=e^2=b^2a^2$. Thus $sa=ce=cb^2a^2$ for some $cb^2\in R$. So $(4)$ holds.
\end{proof}

Recall from \cite{bh18} that a ring $R$ is called $c$-$S$-coherent if any $S$-finite ideal $I$ is $c$-$S$-finitely presented, that is, there exists a finitely presented sub-ideal $J$ of $I$ such that $sI\subseteq J\subseteq I$. By Proposition \ref{s-inj-ext}, the following result holds since any ideal generated by an idempotent is projective, and thus is finitely presented .

\begin{corollary}\label{c-s-coh} Let $R$ be a ring and $S$ a multiplicative subset of $R$. If  $R_S$ is a von Neumann regular ring, then $R$ is $c$-$S$-coherent.
\end{corollary}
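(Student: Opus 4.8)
The plan is to unravel the definition of $c$-$S$-coherence and reduce the entire statement to the single observation that an ideal generated by an idempotent is finitely presented; Proposition~\ref{s-inj-ext} is the bridge linking the hypothesis on $R_S$ to this observation. To show $R$ is $c$-$S$-coherent I must start with an arbitrary $S$-finite ideal $I$ of $R$ and exhibit a finitely presented sub-ideal $J$ of $I$ together with an $s\in S$ satisfying $sI\subseteq J\subseteq I$. Since $R_S$ is von Neumann regular, the equivalence $(1)\Leftrightarrow(3)$ of Proposition~\ref{s-inj-ext} shows that $I$ is $S$-generated by an idempotent; moreover the construction carried out in the proof of that proposition (the step $(2)\Rightarrow(3)$) produces the idempotent inside $I$, so that there are an idempotent $e\in I$ and an $s\in S$ with $sI\subseteq\langle e\rangle\subseteq I$. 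The natural candidate is then $J=\langle e\rangle$, and the whole corollary hinges on whether this $J$ is finitely presented.

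The key step is thus to verify that $\langle e\rangle=Re$ is finitely presented for an idempotent $e$. I would first note that $Re$ is cyclic, hence finitely generated. For the relations I would invoke the Peirce decomposition $R=Re\oplus R(1-e)$, which displays $Re$ as a direct summand of the free module $R$, hence as a finitely generated projective module; finitely generated projectives are always finitely presented. Equivalently, and more explicitly, the surjection $R\twoheadrightarrow Re$ sending $r\mapsto re$ has kernel exactly $R(1-e)$ (if $re=0$ then $r=r(1-e)$, while conversely $r(1-e)e=0$), so the exact sequence $0\to R(1-e)\to R\to Re\to 0$ with $R(1-e)$ and $R$ both finitely generated furnishes a finite presentation of $Re$.

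Putting the pieces together, $J=\langle e\rangle$ is a finitely presented sub-ideal of $I$ with $sI\subseteq J\subseteq I$, which is exactly the assertion that $I$ is $c$-$S$-finitely presented; since $I$ was an arbitrary $S$-finite ideal, $R$ is $c$-$S$-coherent. I do not expect a genuine obstacle: the corollary is a repackaging of Proposition~\ref{s-inj-ext} together with the standard fact about idempotent ideals. The only point deserving attention is making sure the idempotent is chosen inside $I$, so that $\langle e\rangle$ really is a sub-ideal of $I$ rather than merely an ideal containing $sI$; this is guaranteed by the inclusion $\langle e\rangle\subseteq I$ built into condition~$(3)$ as established in Proposition~\ref{s-inj-ext}.
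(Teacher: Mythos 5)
Your proposal is correct and follows exactly the paper's route: the paper likewise combines Proposition~\ref{s-inj-ext} (any $S$-finite ideal is $S$-generated by an idempotent lying inside it) with the observation that an ideal generated by an idempotent is a direct summand of $R$, hence projective and finitely presented. You merely spell out the Peirce decomposition and the explicit presentation $0\to R(1-e)\to R\to Re\to 0$, which the paper leaves implicit.
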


It is certain that for a ring $R$ such that $R_S$ is von Neumann regular,  the element $s\in S$ such that $sa=ra^2$ for some $r\in R$ depends on $a\in R$ by Proposition \ref{s-inj-ext}. Now we give the definition of $u$-$S$-von Neumann regular ring for which the element $s\in S$ is uniform on any element $a\in R$.

\begin{definition}\label{def-s-vn} Let $R$ be a ring and $S$ a multiplicative subset of $R$.
 $R$ is called a $u$-$S$-von Neumann regular ring $($abbreviates uniformly $S$-von Neumann regular ring$)$ provided there exists an element  $s\in S$ satisfying that for any $a\in R$ there exists  $r\in R$ such that $sa=ra^2$.
\end{definition}

Let $\{M_j\}_{j\in \Gamma}$  be a family of $R$-modules. Let $\{m_{i,j}\}_{i\in \Lambda_j}\subseteq M_j$ for each $j\in \Gamma$ and $N_j=\langle m_{i,j}\rangle_{i\in \Lambda_j}$. We say a family of $R$-modules  $\{M_j\}_{j\in \Gamma}$  is \emph{$u$-$S$-generated} by  $\{\{m_{i,j}\}_{i\in \Lambda_j}\}_{j\in \Gamma}$ provided that there exists an element $s\in S$  such that $sM_j\subseteq N_j$ for each $j\in \Gamma$ .
It is well known that a ring $R$ is a von Neumann regular ring if and only if every $R$-module is flat, if and only if any principal (finitely generated) ideal is generated by an idempotent (see \cite[Theorem 3.6.3]{fk16}). Now we give an $S$-analogue of this result.

\begin{theorem}\label{s-vn-ext-char}
Let $R$ be a ring and $S$ a multiplicative subset of $R$. The following statements are equivalent:
\begin{enumerate}
\item  $R$ is a $u$-$S$-von Neumann regular ring;
\item    for any $R$-module $M$ and $N$, there exists  $s\in S$ such that $s\Tor_1^R(M,N)=0$;
\item  there exists  $s\in S$ such that $s\Tor_1^R(R/I,R/J)=0$    for any ideals $I$ and $J$ of $R$;
\item   there exists  $s\in S$ such that $s\Tor_1^R(R/I,R/J)=0$   for any $S$-finite ideals $I$ and $J$ of $R$;
\item   there exists  $s\in S$ such that $s\Tor_1^R(R/\langle a\rangle,R/\langle a\rangle)=0$    for any element $a\in R$;
\item   any $R$-module is $u$-$S$-flat;
\item  the class of all principal ideals of $R$ is $u$-$S$-generated by  idempotents;
\item  the class of all finitely generated ideals of $R$ is $u$-$S$-generated by  idempotents.
\end{enumerate}
\end{theorem}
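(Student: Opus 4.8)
The plan is to run the cycle $(1)\Rightarrow(2)\Rightarrow(3)\Rightarrow(4)\Rightarrow(5)\Rightarrow(1)$ and to graft $(6)$, $(7)$, $(8)$ onto it. Three links are free. The implications $(2)\Rightarrow(3)\Rightarrow(4)\Rightarrow(5)$ only shrink the class of test modules, $(5)$ being the specialization $M=N=R/\langle a\rangle$, so the same witness $s$ is retained, which is exactly what the uniform formulations demand. And $(2)\Rightarrow(6)$ is read off from Theorem \ref{s-flat-ext}: if a single $s$ annihilates $\Tor_1^R(M,F)$ for every pair $M,F$, then for each fixed $F$ the module $\Tor_1^R(M,F)$ is $u$-$S$-torsion, so every module is $u$-$S$-flat.

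For $(5)\Rightarrow(1)$ I would compute the diagonal Tor explicitly. The standard identity $\Tor_1^R(R/\langle a\rangle,N)\cong (0:_N a)\big/\Ann_R(a)N$ specializes, at $N=R/\langle a\rangle$ where $(0:_{R/\langle a\rangle}a)=R/\langle a\rangle$, to $\Tor_1^R(R/\langle a\rangle,R/\langle a\rangle)\cong R\big/\big(\langle a\rangle+\Ann_R(a)\big)$. Hence $(5)$ says precisely that $s\in\langle a\rangle+\Ann_R(a)$ for every $a$; writing $s=ra+b$ with $ba=0$ and multiplying by $a$ gives $sa=ra^2$, which is $(1)$. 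The idempotent conditions are attached as in Proposition \ref{s-inj-ext}: from $(1)$ one sets $e=ra$, so that $e\in\langle a\rangle$, $e^2=se$ and $s\langle a\rangle\subseteq\langle e\rangle\subseteq\langle a\rangle$, giving $(1)\Rightarrow(7)$; finitely many such generators are combined into one as in the proof of $(2)\Rightarrow(3)$ there, giving $(7)\Rightarrow(8)$; and reading the containments backwards, using the idempotent relation to turn the linear membership $sa\in\langle e\rangle$ into the quadratic $s^2a=ut^2a^2$, returns $(1)$ with $s$ replaced by $s^2\in S$, closing $(8)\Rightarrow(1)$.

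To reinsert $(6)$ I would fabricate uniformity from one oversized test module. Set $F=\bigoplus_{a\in R}R/\langle a\rangle$. By $(6)$ this $F$ is $u$-$S$-flat, so Theorem \ref{s-flat-ext} yields a single $s\in S$ with $s\,\Tor_1^R(F,F)=0$. Since $\Tor_1^R$ commutes with direct sums in each variable, $\Tor_1^R(F,F)\cong\bigoplus_{a,b}\Tor_1^R(R/\langle a\rangle,R/\langle b\rangle)$, so this one $s$ kills every diagonal summand $\Tor_1^R(R/\langle a\rangle,R/\langle a\rangle)$; that is $(5)$. Thus $(6)\Rightarrow(5)$, and the previous paragraph finishes. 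It is worth stressing that this step uses the full force of $(6)$: the weaker fact that $R_S$ is von Neumann regular, equivalently that $F_S$ is flat (Corollary \ref{s-flat-loc}), does not suffice, as the module $\mathbb{Z}_{(p)}/\mathbb{Z}$ of Example \ref{uf not-extsion} shows.

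The genuine content, and the step I expect to be the main obstacle, is $(1)\Rightarrow(2)$: from one relation I must extract a single element of $S$ annihilating $\Tor_1^R(M,N)$ for all modules simultaneously. The opening move is clean and uniform: the formula for $\Tor_1^R(R/\langle a\rangle,N)$ together with $sn=ran+(s-ra)n=(s-ra)n\in\Ann_R(a)N$ whenever $an=0$ shows $s\,\Tor_1^R(R/\langle a\rangle,N)=0$ for every principal $\langle a\rangle$ and every $N$, with the single power $s$. The obstacle is to propagate this to an arbitrary first variable without letting the exponent of $s$ grow: a finitely generated ideal, and then a general module, is reached classically by d\'evissage along a finite filtration with cyclic quotients, but each extension step multiplies the annihilator, so a naive argument produces a power of $s$ that increases with the number of generators. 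That this cannot be ignored is exactly the content of Example \ref{uf not-extsion}, where $\Tor_1^R(R/\langle p^k\rangle,\mathbb{Z}_{(p)}/\mathbb{Z})$ is killed by $p^{k}$ with no common bound, while $\Tor_1^R(\mathbb{Q}/\mathbb{Z},\mathbb{Z}_{(p)}/\mathbb{Z})$ is not $u$-$S$-torsion at all. Beating this requires using $(1)$ beyond the principal case: over $R_S$ every finitely generated relation module splits off by von Neumann regularity, and the crux is to show that the uniform relation $sa=ra^2$ bounds, independently of $M$ and $N$, the power of $s$ needed to realize those splittings over $R$—equivalently, that every finite system of relations $\sum_i a_ix_i=0$ admits a flat factorization after multiplication by one fixed power of $s$. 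Establishing this bounded-power equational criterion is the heart of the proof; once it is in hand, $(2)$, and with it the whole chain of equivalences, follows.
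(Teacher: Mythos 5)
Your proposal leaves its central step unproved, and you say so yourself: the passage from the uniform relation $sa=ra^2$ to a single bounded power of $s$ annihilating $\Tor_1^R(M,N)$ for \emph{all} modules $M,N$ is deferred as ``the heart of the proof.'' A proof that ends by naming the missing lemma is not a proof, so the cycle $(1)\Rightarrow(2)$ never closes. The idea you are missing, and which the paper uses to bypass the d\'evissage you worry about, is the identity $\Tor_1^R(R/I,R/J)\cong (I\cap J)/IJ$ for \emph{arbitrary} ideals $I,J$ (\cite[Exercise 3.20]{fk16}): if $a\in I\cap J$ then $(5)$ gives $sa\in\langle a^2\rangle\subseteq IJ$, so $s(I\cap J)\subseteq IJ$ and hence $s\Tor_1^R(R/I,R/J)=0$ with the same $s$ for all pairs $(I,J)$ simultaneously --- no induction on generators, hence no growth of the exponent. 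This yields $(5)\Rightarrow(3)$ in one line, something your route (filtering a general ideal by principal ones) cannot reach. The paper then passes from $(3)$ to $(2)$ by filtering both $M$ and $N$ by cyclic quotients and doing a transfinite induction. To be fair to you, your worry is not idle: in that induction each successor step $0\to M_\alpha\to M_{\alpha+1}\to R/I_{\alpha+1}\to 0$ naively upgrades $s$ to $s^2$, and at limit ordinals unbounded powers give only $S$-torsion, not $u$-$S$-torsion --- exactly the failure mode of Example \ref{uf not-extsion}. So the obstacle you identify is real and is precisely the point at which the paper's own argument is glibbest; but identifying an obstacle is not the same as overcoming it, and you overcome it neither for cyclic second variables (where the $(I\cap J)/IJ$ formula does the job) nor for general ones.

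Two further points. First, your claim that $(2)\Rightarrow(3)$ is ``free'' because it ``only shrinks the class of test modules'' misreads the quantifiers: in $(2)$ the element $s$ is allowed to depend on the pair $(M,N)$, whereas $(3)$ demands one $s$ for all $(I,J)$; you need the direct-sum trick $M=N=\bigoplus_{I}R/I$ (which you do deploy, correctly, for $(6)\Rightarrow(5)$), not a specialization. Second, in $(1)\Rightarrow(7)$ the element $e=ra$ satisfies $e^2=se$, which you note, but condition $(7)$ asks for honest idempotents $e^2=e$; you pass over this without comment. (The paper shares this blemish, but reproducing it does not discharge it.) Your computations that are complete --- the identification $\Tor_1^R(R/\langle a\rangle,R/\langle a\rangle)\cong R/(\langle a\rangle+\Ann_R(a))$ for $(5)\Rightarrow(1)$, and the uniform annihilation $s\Tor_1^R(R/\langle a\rangle,N)=0$ via $sn=(s-ra)n\in\Ann_R(a)N$ --- are correct and slightly cleaner than the paper's, but they cover only the easy arcs of the equivalence.
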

\begin{proof}
$(1)\Leftrightarrow (5):$ It follows from the equivalences:  $s\Tor_1^R(R/\langle a\rangle,R/\langle a\rangle)=0$ if and only if $\frac{s\langle a\rangle}{\langle a^2\rangle}=0$, if and only if there exists  $r\in R$ such that $sa=ra^2$.

$(2)\Leftrightarrow (6)$, $(8)\Rightarrow (7)$  and $ (3)\Rightarrow (4)\Rightarrow (5):$ Trivial.

$(2)\Rightarrow (3)$: Set  $M=N=\bigoplus\limits_{I\lhd R} R/I$. Then $(3)$ holds naturally.

$(3)\Rightarrow (2):$ Suppose $M$ is generated by $\{m_i\mid i\in \Gamma\}$ and $N$ is generated by $\{n_i\mid i\in \Lambda\}$. Well-order $\Gamma$ and $\Lambda$.  Set $M_0=0$ and $M_\alpha=\langle m_i\mid i<\alpha\rangle$  for each $\alpha\leq \Gamma$.  Then $M$ have a continuous   filtration  $\{M_\alpha\mid \alpha\leq \Gamma \}$ with $M_{\alpha+1}/M_\alpha\cong R/I_{\alpha+1} $ and $I_{\alpha}=\Ann_R(m_{\alpha}+M_\alpha\cap Rm_{\alpha})$. Similarly  $N$ has a continuous   filtration  $\{N_\beta\mid \beta\leq \Lambda \}$ with $N_{\beta+1}/N_\beta\cong R/J_{\beta+1} $ and $J_{\beta}=\Ann_R(n_{\beta}+N_\beta\cap Rn_{\beta})$.  Since $s\Tor_1^R(R/I_{\alpha},R/J_{\beta})=0$ for each $\alpha\leq \Gamma$ and $\beta\leq \Lambda$, it is easy to verify $s\Tor_1^R(M,N)=0$ by transfinite induction on both positions of $M$ and $N$.

$(5)\Rightarrow (3):$  By \cite[Exercise 3.20]{fk16}, we have $s\Tor_1^R(R/I,R/J)=\frac{s(I\cap J)}{IJ}$ for any ideals $I$ and $J$ of $R$. So we just need to show $s(I\cap J)\subseteq IJ$. Let $a\in I\cap J$. Since $s\Tor_1^R(R/\langle a\rangle,R/\langle a\rangle)=\frac{s\langle a\rangle}{\langle a^2\rangle}=0$, it follows that $sa\in s\langle a\rangle \subseteq \langle a^2\rangle\subseteq IJ$. Thus $s\Tor_1^R(R/I,R/J)=0$.

$(1)\Rightarrow (7):$  Let $s$ be an element in $S$ such that $sa=ra^2$ for some $r\in R$ and any $a\in R$. Set $e=ra$. Then $se=e^2$ and  $e\in \langle a\rangle$. Since $sa=ea\in \langle e\rangle$, we have $s\langle a\rangle\subseteq \langle e\rangle\subseteq \langle a\rangle$ for any $a\in R$.

$(7)\Rightarrow (8):$ Let  $\{I_j=Ra_{1,j}+\cdots +Ra_{n_j,j}\mid j\in \Gamma\}$ be the family of all finitely generated ideals of $R$. By $(3)$, there exists an element $s\in S$ such that for each $j\in \Gamma$ and $i=1,..,n_j$ there is an  idempotent $e_{i,j}\in Ra_{i,j}$ such that $s\langle a_{i,j}\rangle\subseteq \langle e_{i,j}\rangle$. Set $J_j=Re_{1,j}+\cdots +Re_{n_j,j}$. Then $J_j$ is a sub-ideal of $I_j$  such that $sJ_j\subseteq  I_j \subseteq J_j$. Claim that $J_j$ is generated by an idempotent. Indeed, for any $x\in J_j$, we have $x=r_1e_1+\cdots +r_ne_n=r_1e^2_1+\cdots +r_ne^2_n\in J_j^2$. Thus $J_j^2=J_j$. Since $J_j$ is finitely generated, $J_j=\langle e_j\rangle$ for some idempotent $e_j\in I_j$ by \cite[Theorem 1.8.22]{fk16}. So $\{I_j\mid j\in \Gamma\}$ is  $u$-$S$-generated by $\{\{e_j\}\mid j\in \Gamma\}$.

$(7)\Rightarrow (1):$  There are an element $s\in S$ and  a family of idempotents $\{e_a\mid a\in R\}$ such that $s\langle a\rangle \subseteq \langle e_a\rangle \subseteq \langle a\rangle$ for any $a\in R$. Write $e_a=ba$ for some $b \in R$. Then $e_a=e_a^2=b^2a^2$. Thus $sa=ce_a=cb^2a^2$ for some $cb^2\in R$. So $R$ is $u$-$S$-von Neumann regular.
\end{proof}

\begin{corollary}\label{s-local-vn}
Let $R$ be a ring and $S$ a multiplicative subset of $R$. If  $R$ is a $u$-$S$-von Neumann regular ring, then $R_S$ is a von Neumann regular ring. Consequently, any $u$-$S$-von Neumann regular ring  is $c$-$S$-coherent.
\end{corollary}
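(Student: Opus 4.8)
The plan is to reduce the statement immediately to the equivalence $(1)\Leftrightarrow(4)$ already established in Proposition \ref{s-inj-ext}. By Definition \ref{def-s-vn}, being $u$-$S$-von Neumann regular supplies a single $s\in S$ such that for every $a\in R$ there is some $r\in R$ with $sa=ra^2$. In particular, for each individual $a\in R$ there exist $s\in S$ and $r\in R$ with $sa=ra^2$, which is exactly condition $(4)$ of Proposition \ref{s-inj-ext}. Hence that proposition forces $R_S$ to be von Neumann regular.

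If one prefers a self-contained verification rather than citing Proposition \ref{s-inj-ext}, the key step is to work inside $R_S$ directly: given $\frac{a}{t}\in R_S$, use the relation $sa=ra^2$ together with the fact that $s$ becomes a unit in $R_S$ to write $\frac{a}{1}=\frac{r}{s}\frac{a^2}{1}$, and then clear the denominator $t$ to obtain $\frac{a}{t}=\frac{rt}{s}\left(\frac{a}{t}\right)^2$. This exhibits the required pseudo-inverse for an arbitrary element of $R_S$, so $R_S$ is von Neumann regular.

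For the final assertion, I would simply invoke Corollary \ref{c-s-coh}: once $R_S$ is known to be von Neumann regular, that corollary yields at once that $R$ is $c$-$S$-coherent, which completes the proof.

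There is essentially no hard part here; the only point that requires care is not to conflate the two quantifier orders. The $u$-$S$ hypothesis (a uniform $s$ that works for all $a$ simultaneously) is strictly stronger than condition $(4)$ of Proposition \ref{s-inj-ext} (an $s$ allowed to depend on $a$), so the implication toward ``$R_S$ von Neumann regular'' is immediate; I would emphasize that the reverse implication does not follow and is not being claimed, which is consistent with the later Example \ref{S-vn-not} distinguishing the two notions.
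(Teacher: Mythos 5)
Your proof is correct and follows essentially the same route as the paper, which likewise deduces the first claim from Proposition \ref{s-inj-ext} (the uniform $s$ of Definition \ref{def-s-vn} trivially yields condition $(4)$ there) and the second from Corollary \ref{c-s-coh}. Your remark on the one-way nature of the quantifier weakening is accurate and matches the paper's Example \ref{S-vn-not}.
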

\begin{proof}  It follows from  Proposition \ref{s-inj-ext}, Corollary \ref{c-s-coh} and Theorem \ref{s-vn-ext-char}.
\end{proof}

Note that a ring $R$ such that $R_S$ is von Neumann regular is not necessary $u$-$S$-von Neumann regular.
\begin{example}\label{S-vn-not}
Let $\mathbb{Z}$ be the ring of all integers,  $S=\mathbb{Z}\setminus\{0\}$. Then $\mathbb{Z}_S=\mathbb{Q}$ is a von Neumann regular ring.  Let $p$ be a prime in $\mathbb{Z}$ and $M=\mathbb{Z}_{(p)}/\mathbb{Z}$. Then  $\Tor_1^{\mathbb{Z}}(\mathbb{Q}/\mathbb{Z}, \mathbb{Z}_{(p)}/\mathbb{Z})\cong \mathbb{Z}_{(p)}/\mathbb{Z}$  by \cite[Chapter I, Lemma 6.2(b)]{FS01}. It is easy to verify that $n \mathbb{Z}_{(p)}/\mathbb{Z}\not=0$ for any $n\in S$. Thus $M$ is not  $u$-$S$-torsion, and so $\mathbb{Z}$ is  not a $u$-$S$-von Neumann regular ring.
\end{example}

\begin{corollary} Let $R$ be a ring. Let $S$ be a  multiplicative subset of $R$ consisting of finite  elements. Then $R$ is a $u$-$S$-von Neumann regular ring if and only if $R_S$ is a von Neumann regular ring.
\end{corollary}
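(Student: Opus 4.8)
The plan is to treat the two implications separately, since one of them is already available from the preceding results while the other is where the finiteness hypothesis on $S$ does all the real work. Throughout I read ``$S$ consisting of finite elements'' as $S$ being a finite set, exactly as in Proposition \ref{fin-S-u-tor}.

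For the forward implication I would simply cite Corollary \ref{s-local-vn}: if $R$ is a $u$-$S$-von Neumann regular ring, then $R_S$ is von Neumann regular. This holds for an arbitrary multiplicative subset $S$ and uses no finiteness, so nothing new is needed here.

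The genuine content is the converse. Assuming $R_S$ is von Neumann regular, I would invoke the equivalence $(1)\Leftrightarrow(4)$ of Proposition \ref{s-inj-ext}: for each $a\in R$ there exist $s_a\in S$ and $r_a\in R$ with $s_a a = r_a a^2$. The obstacle is precisely that $s_a$ depends on $a$, whereas Definition \ref{def-s-vn} demands a \emph{single} $s\in S$ valid for every $a$ simultaneously. This is the same uniformity gap that finiteness of $S$ closes in Proposition \ref{fin-S-u-tor}, and I expect no difficulty beyond organizing that idea.

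To close the gap, write $S=\{s_1,\dots,s_n\}$ and set $s=s_1\cdots s_n\in S$. For a given $a$, the element $s_a$ is one of the $s_i$, so $s=s_a t_a$ where $t_a\in R$ is the product of the remaining factors. Multiplying the relation $s_a a = r_a a^2$ by $t_a$ then gives $sa=(t_a r_a)a^2$, so $r:=t_a r_a\in R$ witnesses the defining identity for this fixed $s$. Since $s$ is independent of $a$, the ring $R$ is $u$-$S$-von Neumann regular. The only point worth double-checking is that $t_a$ genuinely lies in $R$ (it does, being a product of elements of $R$), so that $r$ is a legitimate ring element; everything else is immediate.
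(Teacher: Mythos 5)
Your proof is correct and follows essentially the same route as the paper: the forward direction is Corollary \ref{s-local-vn}, and the converse takes $s=s_1\cdots s_n$ and upgrades the pointwise witnesses from Proposition \ref{s-inj-ext}(4) to the uniform one by multiplying through by the remaining factors. You merely spell out the final multiplication step (introducing $t_a$) that the paper leaves implicit.
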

\begin{proof} We just need to show that if $R_S$ is a von Neumann regular ring then $R$ is a $u$-$S$-von Neumann regular ring. Let $S=\{s_1,\cdots ,s_n\}$. Set $s=s_1\cdots s_n$. By Proposition \ref{s-inj-ext},  for any $a\in R$, there exists  $s_i\in S$ and $r_a\in R$ such that $s_ia=r_aa^2$. Thus $sa=ra^2$ for any $a\in R$ and some $r\in R$.
\end{proof}

Since every  flat module is  $u$-$S$-flat,  von Neumann regular rings  are $u$-$S$-von Neumann regular. The following result shows $u$-$S$-von Neumann regular rings  are always von Neumann regular provided $S$ is a regular multiplicative set, i.e., the multiplicative set $S$ is composed of non-zero-divisors.
\begin{proposition}\label{s-vn-vn} Let $R$ be a ring and $S$ a regular multiplicative subset of $R$.
 Then $R$ is $u$-$S$-von Neumann regular if and only if  $R$ is  von Neumann regular.
\end{proposition}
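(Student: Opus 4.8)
The plan is to prove the two implications separately, with essentially all of the content concentrated in the ``only if'' direction. The ``if'' direction is immediate: if $R$ is von Neumann regular, then for every $a\in R$ there is $r\in R$ with $a=ra^2$, so taking the uniform element $s=1\in S$ witnesses that $R$ is $u$-$S$-von Neumann regular in the sense of Definition \ref{def-s-vn}. This is already recorded in the discussion preceding the statement, so I would simply invoke it.

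For the converse, suppose $R$ is $u$-$S$-von Neumann regular, so that there is a fixed $s\in S$ such that for every $a\in R$ one can choose $r\in R$ with $sa=ra^2$. The key reduction is that it suffices to show this uniform $s$ is a unit: once $s$ is invertible, then for an arbitrary $a\in R$ the relation $sa=ra^2$ gives $a=(s^{-1}r)a^2$, which is exactly von Neumann regularity of $R$. Thus the whole problem collapses to proving the invertibility of the single element $s$.

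To prove that $s$ is a unit I would apply the defining relation not to a general $a$ but to the specific element $a=s^2$. This yields some $r\in R$ with $s\cdot s^2=r(s^2)^2$, that is $s^3=rs^4$, whence $s^3(1-rs)=0$. Since $S$ is a regular multiplicative set, $s$ is a non-zero-divisor, and hence so is $s^3$; cancelling it forces $1-rs=0$, i.e. $rs=1$, so $s$ is a unit. The only subtle point, and the one I would flag as the crux, is the choice of test element: feeding $a=s$ into the relation merely returns the trivial identity $s^2=rs^2$ (forcing $r=1$ and giving no information), whereas choosing $a=s^2$ manufactures the exponent gap between $s^3$ and $s^4$ that, combined with the regularity of $s$, forces invertibility. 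Beyond selecting this test element I do not anticipate any obstacle, as the remaining argument is short and purely computational.
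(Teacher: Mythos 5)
Your proposal is correct and follows essentially the same route as the paper: both directions are handled identically, and the key step---substituting $a=s^2$ to get $s^3=rs^4$ and cancelling the non-zero-divisor $s^3$ to conclude $rs=1$---is exactly the paper's argument. Your added remark on why $a=s$ fails as a test element is accurate but not needed for the proof.
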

\begin{proof}  We just need to show if $R$ is  $u$-$S$-von Neumann regular, then $R$ is von Neumann regular. Suppose  $R$ is a $u$-$S$-von Neumann regular ring. Then there exists  $s\in S$ such that  for any $a\in R$ there exists  $r\in R$ satisfying $sa=ra^2$.  Taking $a=s^2$, we have $s^3=rs^4$. Since  $s$ is a non-zero-divisor of $R$,  we have $1=sr$. Thus $s$ is a unit. So for any $a\in R$ there exists  $r\in R$ such that $a=(s^{-1}r)a^2$. It follows that $R$ is a von Neumann regular ring.
\end{proof}

However, the condition that ``any element in $S$ is a non-zero-divisor'' in Proposition \ref{s-vn-vn} cannot be removed.  Let $R$ be any ring and $S$ a multiplicative subset of $R$ containing a nilpotent element. Then $R$ is a $u$-$S$-von Neumann regular ring. Indeed, let $s$ be a nilpotent element in $R$ with nilpotent index $n$. Then $0=s^n\in S$. Thus for any $a\in R$, we have $0a=0a^2=0$. So $R$ is $u$-$S$-von Neumann regular.  If the   multiplicative subset $S$ of $R$  does not contain $0$, the condition that ``any element in $S$ is a non-zero-divisor'' in Corollary \ref{s-vn-vn} also cannot be removed.

\begin{example}\label{exam-not-ut-1}
Let $T=\mathbb{Z}_2\times \mathbb{Z}_2$ be a semi-simple ring and $s=(1,0)\in T$. Then any element $a\in T$ satisfies $a^2=a$ and $2a=0$. Let $R=T[x]/\langle sx,x^2\rangle$ with $x$ the indeterminate  and $S=\{1,s\}$ be a multiplicative subset of $R$. Then $R$ is a $u$-$S$-von Neumann regular ring, but $R$ is not von Neumann regular. Indeed, let $r=a+b\overline{x}$ be any element in $R$, where $\overline{x}$ is the residual element of $x$ in $R$ and $a,b\in T$. Then  $sr=s(a+b\overline{x})=sa=sa^2=s(a^2+2ab\overline{x}+b^2\overline{x}^2)=s(a+b\overline{x})^2=sr^2$. Thus $R$ is  $u$-$S$-von Neumann regular. However, since $R$ is not reduced, $R$ is not von Neumann regular by \cite[Theorem 3.6.16(2), Exercise 3.48]{fk16}.
\end{example}

Let $\p$ be a prime ideal of $R$. We say a ring $R$ is a \emph{$u$-$\p$-von Neumann regular ring}  shortly provided  $R$ is an $u$-$(R\setminus\p)$-von Neumann regular ring. The final result gives a new local characterization of von Neumann regular rings.
\begin{proposition}\label{s-vn-loc-char}
Let $R$ be a ring. Then the following statements are equivalent:
 \begin{enumerate}
\item  $R$ is a von Neumann regular ring;
\item   $R$ is a $u$-$\p$-von Neumann regular ring for any $\p\in \Spec(R)$;
\item   $R$ is a $u$-$\m$-von Neumann regular ring for any $\m\in \Max(R)$.
 \end{enumerate}
\end{proposition}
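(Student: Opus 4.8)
The plan is to establish the cycle $(1)\Rightarrow(2)\Rightarrow(3)\Rightarrow(1)$, where the two forward implications are essentially formal and all the content sits in $(3)\Rightarrow(1)$. For $(1)\Rightarrow(2)$, if $R$ is von Neumann regular then for every $\p\in\Spec(R)$ one may take the witness $s=1\in R\setminus\p$: the defining equation $a=ra^2$ already has the required uniform shape $1\cdot a=ra^2$, so $R$ is $u$-$\p$-von Neumann regular. The implication $(2)\Rightarrow(3)$ is immediate, since every maximal ideal is prime.

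For $(3)\Rightarrow(1)$, I would first reduce von Neumann regularity to the pointwise condition that $a\in\langle a^2\rangle$ for every $a\in R$; indeed, if $a=ra^2$ then $a=a(ra)a$ exhibits a quasi-inverse, and the converse is clear. So fix $a\in R$ and consider the conductor ideal $J=\{t\in R\mid ta\in\langle a^2\rangle\}$. The goal is to prove $J=R$, which forces $1\in J$, i.e. $a\in\langle a^2\rangle$, and hence (as $a$ is arbitrary) von Neumann regularity.

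To see $J=R$, I would show $J$ lies in no maximal ideal. Fix $\m\in\Max(R)$. By hypothesis $R$ is $u$-$(R\setminus\m)$-von Neumann regular, so there is a witness $s_\m\in R\setminus\m$ such that $s_\m a=ra^2$ for some $r\in R$; in particular $s_\m a\in\langle a^2\rangle$, that is $s_\m\in J$. Since $s_\m\notin\m$, we get $J\not\subseteq\m$. As $\m$ was arbitrary, $J$ is contained in no maximal ideal, so $J=R$ (a proper ideal would be contained in some maximal ideal), and the conclusion follows.

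The only step worth flagging is the globalization of the local witnesses. Rather than passing through the localizations $R_\m$, it is cleanest to package all the $s_\m$ as elements of the single conductor ideal $J$ and invoke the fact that an ideal meeting the complement of every maximal ideal must be the unit ideal; this is the same mechanism already used in Proposition \ref{s-flat-loc-char}. Note also that the argument uses only the per-element consequence $s_\m a\in\langle a^2\rangle$, not the full strength of $u$-$\m$-von Neumann regularity (the uniformity of $s_\m$ over all $a\in R$), so no delicate choice coordinating different elements $a$ is needed.
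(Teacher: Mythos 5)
Your proof is correct, but it takes a genuinely different route from the paper's. For $(3)\Rightarrow(1)$ the paper never touches elements: it invokes Theorem \ref{s-vn-ext-char} (a ring is $u$-$S$-von Neumann regular iff every module is $u$-$S$-flat), so hypothesis $(3)$ makes every $R$-module $u$-$\m$-flat for all $\m\in\Max(R)$, whence flat by Proposition \ref{s-flat-loc-char}, whence $R$ is von Neumann regular; similarly its $(1)\Rightarrow(2)$ goes through flatness of all modules rather than through the witness $s=1$. Your argument instead works directly with the definition: you reduce von Neumann regularity to $a\in\langle a^2\rangle$ for each $a$, form the conductor ideal $J=\{t\in R\mid ta\in\langle a^2\rangle\}$, observe that each local witness $s_\m$ lies in $J\setminus\m$, and conclude $J=R$. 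This is the same local-to-global mechanism the paper uses inside Proposition \ref{s-flat-loc-char} (an ideal meeting the complement of every maximal ideal is the unit ideal), but applied to elements rather than to $\Tor$ modules. What your version buys is self-containment and economy of hypothesis: it needs neither the $u$-$S$-flatness machinery nor the full uniformity of the witness over all $a$, only the per-element consequence $s_\m a\in\langle a^2\rangle$ — which you correctly flag. What the paper's version buys is brevity given its earlier results, and it displays the proposition as a formal corollary of the module-theoretic characterizations. Both are complete proofs; no gaps in yours.
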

\begin{proof} $(1)\Rightarrow (2):$  Let $F$ be an $R$-module and $\m\in \Max(R)$. Then $F$ is flat, and thus $u$-$\m$-flat. So $R$ is an $u$-$\m$-von Neumann regular ring.

 $(2)\Rightarrow (3):$  Trivial.

 $(3)\Rightarrow (1):$  Let $M$ be an $R$-module. Then $M$ is  $\m$-flat for any $\m\in \Max(R)$. Thus $M$ is flat by Proposition \ref{s-flat-loc-char}. So $R$ is a von Neumann regular ring.
\end{proof}

\begin{acknowledgement}\quad\\
The author was supported by  the National Natural Science Foundation of China (No. 12061001).
\end{acknowledgement}

\end{document}